\newcommand{\NN}{\mathrm{I\!N\!}}
\def\subsection{\@startsection{subsection}{3}
  \z@{.5\linespacing\@plus.7\linespacing}{.1\linespacing}
  {\normalfont\itshape}}
\newtheorem{theorem}{Theorem}[section]
\newtheorem{lemma}[theorem]{Lemma}
\newtheorem{corollary}[theorem]{Corollary}
\newtheorem{proposition}[theorem]{Proposition}
\theoremstyle{definition}
\newtheorem{problem}{Problem}
\newtheorem*{remark}{Remark}
\newtheorem*{remarks}{Remarks}
\numberwithin{equation}{section}
\mathchardef\hyphen="2D
\def\@tvsp{\mathchoice{{}\mkern-4.5mu}{{}\mkern-4.5mu}{{}\mkern-2.5mu}{}}
\def\ln{\left|\@tvsp\left|\@tvsp\left|}
\def\rn{\right|\@tvsp\right|\@tvsp\right|}
\begin{document}
\title{A coordinate free characterization of certain quasidiagonal operators}
\author{March T.~Boedihardjo}
\address{Department of Mathematics, Texas A\&M University, College Station, Texas 77843}
\email{march@math.tamu.edu}
\keywords{universal Banach spaces, universal operators, quasidiagonal operators, ultraproducts of operators, approximate unitary equivalence}
\subjclass[2010]{47A66, 47A58}
\begin{abstract}
We obtain (i) a new, coordinate free, characterization of quasidiagonal operators with essential spectra contained in the unit circle by adapting the proof of a classical
result in the theory of Banach spaces, (ii) an affirmative answer to some questions of Hadwin, and (iii) an alternative proof of Hadwin's characterization of the SOT, WOT
and $*$-SOT closure of the unitary orbit of a given operator on a separable, infinite dimensional, complex Hilbert space.
\end{abstract}
\maketitle
\allowdisplaybreaks
\section{Introduction}\label{1}
In this paper, $\mathcal{H}$ is always a fixed separable, infinite dimensional, complex Hilbert space, and $\mathcal{B(H)}$ is the algebra of all operators (i.e., bounded
linear transformations) on $\mathcal{H}$. The ideal of compact operators in $\mathcal{B(H)}$ is denoted by $\mathcal{K(H)}$. If $\mathcal{H}_{1}$ and $\mathcal{H}_{2}$ are
Hilbert spaces, then $\mathcal{B}(\mathcal{H}_{1},\mathcal{H}_{2})$ denotes the set of all operators from $\mathcal{H}_{1}$ into $\mathcal{H}_{2}$.

Usually the connection between the theory of Banach spaces and the theory of operators on Hilbert space involve the study of spaces of operators as Banach spaces and vice
versa, Banach spaces as subspaces of $\mathcal{B(H)}$, or operators on Banach space as generalization of operators on Hilbert space. The purpose of this paper is to
illustate that new insights into operator theory can also be obtained from the theory of Banach spaces via the following Replacement Rule:

{\it Every Banach space is replaced by an operator, a complemented subspace of a Banach space is replaced by a reducing part of the corresponding operator, and an operator
between Banach spaces is replaced by an operator intertwining the corresponding operators.}

Using this Replacement Rule, we investigate the analogs in operator theory of (i) the problem of complementably universal Banach spaces and (ii) ultraproducts of Banach
spaces. The main consequences of this investigation are
\begin{enumerate}[(I)]
\item a coordinate free characterization of quasidiagonal operators with essential spectra contained in the unit circle (i.e., a characterization that does not require one
to find a decomposition of the space into finite dimensional subspaces or to find an appropriate sequence of projections converging strongly to the identity in order to
determine that a given operator with essential spectrum contained in the unit circle is quasidiagonal.);
\item the following result: Suppose that $T_{1},T_{2}\in\mathcal{B(H)}$. If $\lambda\geq 1$ and
\begin{equation}\label{11e}
T_{2}\in\{ST_{1}S^{-1}:S\in\mathcal{B(H)}\text{ with }\|S\|\|S^{-1}\|\leq\lambda\}^{-\|\,\|},
\end{equation}
then there exists a sequence $(S_{n})_{n\geq 1}$ of invertible operators on $\mathcal{H}$ with $\|S_{n}\|\|S_{n}^{-1}\|\leq\lambda$ such that $\displaystyle\lim_{n\to
\infty}\|T_{2}-S_{n}T_{1}S_{n}^{-1}\|=0$ and $T_{2}-S_{n}T_{1}S_{n}^{-1}\in\mathcal{K(H)}$.;
and
\item an alternative proof of Hadwin's characterization \cite{Hadwin2} of the SOT, WOT and $*$-SOT closure of the unitary orbit of a given operator on $\mathcal{H}$.
\end{enumerate}
The author later became aware that the proof of (II) answers affirmatively the following questions of Hadwin (see Question 1 and 9 in \cite{Hadwin3}.)

Hadwin defined two operators $T_{1},T_{2}\in\mathcal{B(H)}$ to be {\it approximately similar} if there exists $\lambda\geq 1$ satisfying (\ref{11e}) above. He asked
whether or not $T_{1},T_{2}\in\mathcal{B(H)}$ are approximately similar (if and) only if there exist $n\geq 1$ and $B_{1},B_{2},\ldots,B_{n}\in\mathcal{B(H)}$ such that
$B_{1}=T_{1}$, $B_{n}=T_{2}$ and for each $1\leq k<n$, $B_{k}$ is either similar or approximately unitarily equivalent to $B_{k+1}$. Hadwin also asked that in case this is
true, can we find one $n\geq 1$ that is valid for all $T_{1}$ and $T_{2}$ that are approximately similar.

He pointed out that if there exists such $n\geq 1$, it has to be at least 4 (see Example 1 in \cite{Hadwin3}) and that $n=4$ is valid under the assumption that the unital
$C^{*}$-algebra generated by $T_{1}$ and the unital $C^{*}$-algebra generated by $T_{2}$ are both disjoint from $\mathcal{K(H)}$ except for 0. From the proof of (II), we
obtain that $n=4$ is valid without this assumption. Indeed, we obtain that if $T_{1}$ and $T_{2}$ are approximately similar, then $T_{1}$ is approximately unitarily
equivalent to an operator $T_{1}'\in\mathcal{B(H)}$ that is similar to an operator $T_{2}'\in\mathcal{B(H)}$ that is approximately unitarily equivalent to $T_{2}$.

In Section 2, we consider the complementably universality problem for Banach spaces (see Problem \ref{21p} below), and we find an analogous problem in operator theory (see
Problem \ref{23p} below). Then by adapting (via the above Replacement Rule) the proof of a universality result of Johnson and Szankowski in \cite{Johnson}, we obtain a
partial solution to Problem \ref{23p} (see Theorem \ref{22}). This partial solution yields (I) above (see Corollary \ref{26}).

In Section 3, we consider ultraproducts of operators on Hilbert space, and use them, together with the Calkin representation \cite{Calkin} and Voiculescu's theorem
\cite{Voiculescu}, to obtain (II) and (III) above (see Theorem \ref{36} and Theorem \ref{37}, respectively). The connection of the results in this section to the theory of
Banach spaces is not clear at all at the first glance. But all the results were indeed inspired by ultraproducts of Banach spaces and a closely related concept finite
representability of Banach spaces. See the end of the section. The author later became aware that the technique used in this section is similar to that used in
\cite[Section 3]{Foias}.

We begin by introducing some terminology and notation that will be needed in what follows.

Subspaces are always assumed to be norm closed. Throughout this paper, we will systematically use the symbols $X,Y,Z$ for Banach spaces, $A,B,S,T$ for operators, $K$ for a
compact operator, $W$ for a unitary operator between Hilbert spaces, $P,Q$ for idempotents, and $I$ for the identity operator on a Banach space.
\subsection*{A. Operator theory}
Let $T_{1}\in\mathcal{B}(\mathcal{H}_{1})$ and $T_{2}\in\mathcal{B}(\mathcal{H}_{2})$. An operator $A\in\mathcal{B}(\mathcal{H}_{1},\mathcal{H}_{2})$ intertwines $T_{1}$
and $T_{2}$ if $AT_{1}=T_{2}A$.

The operators $T_{1}$ and $T_{2}$ are {\it compalent} \cite{Pearcy}, denoted by $T_{1}\stackrel{c}{\sim}T_{2
}$, if there exist a unitary operator $W\in\mathcal{B}(\mathcal{H}_{1},\mathcal{H}_{2})$ and a compact operator $K\in\mathcal{K}(\mathcal{H}_{2})$ such that
\[T_{2}=WT_{1}W^{*}+K;\]
$T_{1}$ and $T_{2}$ are {\it approximately unitarily equivalent} \cite{Voiculescu}, denoted by $T_{1}\simeq_{a}T_{2}$, if there exists a sequence $(W_{n})_{n\geq 1}$ of
unitary operators in $\mathcal{B}(\mathcal{H}_{1},\mathcal{H}_{2})$ such that $T_{2}-W_{n}T_{1}W_{n}^{*}\in\mathcal{K}(\mathcal{H}_{2})$ for all $n\geq 1$ and
\[\lim_{k\to\infty}\|T_{2}-W_{k}T_{1}W_{k}^{*}\|=0;\]
$T_{1}$ and $T_{2}$ are {\it unitarily equivalent}, denoted by $T_{1}\cong T_{2}$, if there exists a unitary operator $W\in\mathcal{B}(\mathcal{H}_{1}, \mathcal{H}_{2})$
such that $T_{2}=WT_{1}W^{*}$. The {\it unitary orbit} of an operator $T\in\mathcal{B(H)}$ is defined by
\[\mathcal{U}(T):=\{T_{0}\in\mathcal{B(H)}:T\cong T_{0}\}.\]

Let $T\in \mathcal{B(H)}$ and let $\mathcal{M}$ be a subspace of $\mathcal{H}$. An operator $T_{0}\in
\mathcal{B(M)}$ is a {\it restriction} of $T$ if $\mathcal{M}$ is invariant under $T$ and $T_{0}=T|_{\mathcal{M}}$; $T_{0}$ is a {\it reducing part} of $T$ if moreover
$\mathcal{M}$ is a {\it reducing subspace} for $T$, i.e., invariant under $T$ and $T^{*}$; $T_{0}$ is a {\it compression} of $T$ if $T_{0}=PT|_{\mathcal{M}}$ where $P$ is
the orthogonal projection from $\mathcal{H}$ onto $\mathcal{M}$.

The operator $T$ is {\it block diagonal} \cite{Halmos} if it is unitarily equivalent to a countably
infinite direct sum of operators each of which acts on a finite dimensional Hilbert space; $T$ is {\it quasidiagonal} \cite{Halmos} if it is the sum of a block diagonal
operator and a compact operator; $T$ is {\it subnormal} if it is the restriction of a normal operator; $T$ is {\it contractive} if $\|T\|\leq 1$. A contractive operator is
called a {\it contraction}.

Let $\pi$ be the quotient map from $\mathcal{B(H)}$ onto $\mathcal{B(H)}/\mathcal{K(H)}$. We write $\|T\|_{e}:=\|\pi(T)\|$ and $\sigma_{e}(T):=\sigma(\pi(T))$ for the
essential norm and the essential spectrum of $T$, respectively. A {\it representation} $\rho$ on a Hilbert space $\mathcal{H}_{0}$ of a unital $C^{*}$-algebra
$\mathcal{A}$ is a $*$-homomorphism from $\mathcal{A}$ into $\mathcal{B}(\mathcal{H}_{0})$. We say that $\rho$ is {\it unital} if $\rho(1)=I$. If $a\in\mathcal{A}$ then
the unital $C^{*}$-subalgebra of $\mathcal{A}$ generated by $a$ is denoted by $C^{*}(a)$.

As usual, the strong operator topology is denoted by SOT and the weak operator topology is denoted by WOT. A net $\{T_{\alpha}\}_{\alpha\in\Lambda}$ of operators in
$\mathcal{B(H)}$ converges in the {\it $*$-strong operator topology} if both $T_{\alpha}\to T$ and $T_{\alpha}^{*}\to T^{*}$ in SOT. This topology is denoted by
$*$-SOT.

The following known lemmas are stated here for the reader's convenience.
\begin{lemma}[\cite{Halmos}, page 903]\label{11}
Every contractive quasidiagonal opeator is the sum of a contractive block diagonal operator and a compact operator.
\end{lemma}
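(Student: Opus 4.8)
The plan is to exploit the elementary fact that a compression of a contraction is again a contraction, and to replace the (possibly non-contractive) block diagonal summand furnished by the definition of quasidiagonality by the diagonal compression of $T$ itself. Since the conclusion is invariant under unitary conjugation and such conjugation preserves the norm, compactness, and block diagonality, I would first unpack the definition and normalize: writing $T=B+K$ with $B$ block diagonal and $K\in\mathcal{K(H)}$, I may assume $\mathcal{H}=\bigoplus_{j\geq 1}\mathcal{H}_{j}$ with each $\mathcal{H}_{j}$ finite dimensional and $B=\bigoplus_{j\geq 1}B_{j}$, $B_{j}\in\mathcal{B}(\mathcal{H}_{j})$; proving the statement for this conjugate and conjugating back then gives it for $T$. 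Let $R_{j}$ be the orthogonal projection of $\mathcal{H}$ onto $\mathcal{H}_{j}$, so the $R_{j}$ are mutually orthogonal finite rank projections with $\sum_{j}R_{j}=I$ in SOT.

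The key step is to set $D:=\sum_{j\geq 1}R_{j}TR_{j}$ rather than to keep $B$. Because the summands act on the mutually orthogonal finite dimensional spaces $\mathcal{H}_{j}$, the operator $D$ is block diagonal, and $\|D\|=\sup_{j}\|R_{j}TR_{j}\|\leq\|T\|\leq 1$, so it is automatically a contractive block diagonal operator. It then remains only to verify that $T-D$ is compact. Using $R_{j}BR_{j'}=\delta_{jj'}B_{j}$ one computes $D=B+\sum_{j}R_{j}KR_{j}$, whence $T-D=K-\sum_{j}R_{j}KR_{j}$, and the whole matter reduces to the compactness of the diagonal compression $\sum_{j}R_{j}KR_{j}$ of the compact operator $K$.

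The one point requiring a genuine (if standard) argument is this last compactness. I would observe that $R_{j}\to 0$ in SOT, so that compactness of $K$ (together with $R_{j}=R_{j}^{*}$) gives $\|KR_{j}\|\to 0$; then $\|R_{j}KR_{j}\|\leq\|KR_{j}\|\to 0$, and since the blocks are orthogonally supported the tails obey $\|\sum_{j>N}R_{j}KR_{j}\|=\sup_{j>N}\|R_{j}KR_{j}\|\to 0$ as $N\to\infty$. Thus $\sum_{j}R_{j}KR_{j}$ is a norm limit of finite rank operators, hence compact, and therefore $T-D=K-\sum_{j}R_{j}KR_{j}$ is compact. The main conceptual obstacle is purely the realization that the block diagonal part supplied by the definition should be discarded in favour of the compression of $T$ itself, which is contractive by construction; once this is in place the remaining estimates are routine.
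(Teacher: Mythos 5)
Your proof is correct. Note that the paper never proves this lemma itself---it is stated purely as a known fact quoted from Halmos---so there is no internal argument to compare against; your proof is essentially the standard one underlying that citation: discard the block diagonal summand $B$ furnished by the definition of quasidiagonality and use instead the diagonal compression $D=\sum_{j}R_{j}TR_{j}$ of $T$, which is contractive automatically, then reduce everything to the compactness of $\sum_{j}R_{j}KR_{j}$. All the delicate points are handled properly: the unitary normalization at the start is legitimate because block diagonality, compactness and the norm bound are all preserved under unitary conjugation; the identity $T-D=K-\sum_{j}R_{j}KR_{j}$ follows from $R_{j}BR_{j'}=\delta_{jj'}B_{j}$; and the norm-convergence of the tails, via $\|KR_{j}\|=\|R_{j}K^{*}\|\to 0$ (SOT convergence of $R_{j}\to 0$ composed with a compact operator) together with $\bigl\|\sum_{j>N}R_{j}KR_{j}\bigr\|=\sup_{j>N}\|R_{j}KR_{j}\|$ for orthogonally supported blocks, correctly exhibits $\sum_{j}R_{j}KR_{j}$ as a norm limit of finite rank operators.
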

\begin{lemma}[\cite{Rosenblum}, Corollary 3.3]\label{12}
Let $\mathcal{H}_{1}$ and $\mathcal{H}_{2}$ be separable infinite dimensional complex Hilbert spaces. If $T_{1}\in\mathcal{B}(\mathcal{H}_{1})$ and $T_{2}\in \mathcal{B}(
\mathcal{H}_{2})$ have disjoint essential spectra and $A\in\mathcal{B}(\mathcal{H}_{1},\mathcal{H}_{2})$ intertwines $T_{1}$ and $T_{2}$, then $A$ is compact.
\end{lemma}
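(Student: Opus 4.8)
The plan is to pass to the Calkin algebra and exploit the fact that disjointness of the essential spectra makes $\sigma(\pi(T_1))\cup\sigma(\pi(T_2))$ disconnected, so that a holomorphic idempotent separating the two pieces is available. To make the cross-space intertwining fit into a single algebra, I would first form $\mathcal{H}:=\mathcal{H}_{1}\oplus\mathcal{H}_{2}$ and set
\[
T:=\begin{pmatrix} T_{1} & 0 \\ 0 & T_{2}\end{pmatrix},\qquad \tilde{A}:=\begin{pmatrix} 0 & 0 \\ A & 0\end{pmatrix}\in\mathcal{B}(\mathcal{H}).
\]
A one-line matrix computation shows that the relation $AT_{1}=T_{2}A$ is equivalent to $\tilde{A}T=T\tilde{A}$, and that $A$ is compact if and only if $\tilde{A}$ is compact. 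Thus it suffices to prove that an off-diagonal operator $\tilde{A}$ of this form commuting with $T$ must be compact.

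Next I would move to the Calkin algebra $\mathcal{C}:=\mathcal{B}(\mathcal{H})/\mathcal{K}(\mathcal{H})$ with quotient map $\pi$. Since $\tilde{A}T=T\tilde{A}$ holds exactly, $\pi(\tilde{A})$ commutes with $\pi(T)$; and since $T=T_{1}\oplus T_{2}$ is block diagonal, $\sigma_{\mathcal{C}}(\pi(T))=\sigma_{e}(T)=\sigma_{e}(T_{1})\cup\sigma_{e}(T_{2})$, a disjoint union of two compact sets. I would then choose a function $f$ holomorphic on a neighborhood of this set with $f\equiv 1$ near $\sigma_{e}(T_{1})$ and $f\equiv 0$ near $\sigma_{e}(T_{2})$, and set $p:=f(\pi(T))$ via the holomorphic functional calculus. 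Because $p$ is a norm limit of Riemann sums of the resolvents $(\zeta-\pi(T))^{-1}$, and $\pi(\tilde{A})$ commutes with each such resolvent, $\pi(\tilde{A})$ commutes with $p$.

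The key step is then to identify $p$ explicitly. Writing $P_{1}:=I_{\mathcal{H}_{1}}\oplus 0$ for the orthogonal projection of $\mathcal{H}$ onto $\mathcal{H}_{1}$, I would use that the idempotent $\pi(P_{1})$ reduces $\pi(T)$ into two corners, on which $\pi(T)$ acts as the images of $T_{1}$ and $T_{2}$ in the respective Calkin algebras, with spectra $\sigma_{e}(T_{1})$ and $\sigma_{e}(T_{2})$. Since $f\equiv 1$ on $\sigma_{e}(T_{1})$ and $f\equiv 0$ on $\sigma_{e}(T_{2})$, the functional calculus respects this block decomposition and yields $p=f(\pi(T))=\pi(P_{1})$. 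Consequently $\pi(P_{1})$ commutes with $\pi(\tilde{A})$, i.e. $P_{1}\tilde{A}-\tilde{A}P_{1}\in\mathcal{K}(\mathcal{H})$; but a direct computation gives $P_{1}\tilde{A}-\tilde{A}P_{1}=-\tilde{A}$, so $\tilde{A}$, and hence $A$, is compact.

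The main obstacle I anticipate is the bookkeeping in this last identification: one must verify that $\pi(T)$ genuinely decomposes as a direct sum across the idempotent $\pi(P_{1})$ in $\mathcal{C}$, so that the two spectral pieces are exactly $\sigma_{e}(T_{1})$ and $\sigma_{e}(T_{2})$, and that the holomorphic functional calculus is compatible with this decomposition. This is where the disjointness hypothesis enters in an essential way, and it is the analytic heart of the argument; conceptually it is the assertion that the generalized derivation $X\mapsto T_{2}X-XT_{1}$ induces an invertible operator on the quotient $\mathcal{B}(\mathcal{H}_{1},\mathcal{H}_{2})/\mathcal{K}(\mathcal{H}_{1},\mathcal{H}_{2})$, which is the Rosenblum-type spectral inclusion $\sigma\subseteq\sigma_{e}(T_{2})-\sigma_{e}(T_{1})$ underlying the cited result.
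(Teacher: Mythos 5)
Your proof is correct, but note that the paper itself contains no proof of this lemma: it is stated ``for the reader's convenience'' and cited to Rosenblum, so the only meaningful comparison is with that source. Your route is genuinely different from Rosenblum's. The dilation step is fine ($\tilde{A}T=T\tilde{A}$ iff $AT_{1}=T_{2}A$, and $\tilde{A}$ is compact iff $A$ is), and the ``bookkeeping'' you flag as the main obstacle does go through cleanly: the map $\Phi\bigl(\pi_{1}(X),\pi_{2}(Y)\bigr):=\pi(X\oplus Y)$ is a unital, contractive homomorphism from the direct sum of the two Calkin algebras into the Calkin algebra of $\mathcal{H}_{1}\oplus\mathcal{H}_{2}$; unital homomorphisms can only shrink spectra and intertwine the holomorphic functional calculus (push the Riesz integral through $\Phi$), and $f$ applied to the element $\bigl(\pi_{1}(T_{1}),\pi_{2}(T_{2})\bigr)$ is $(1,0)$ by the choice of $f$, whence $p=f(\pi(T))=\Phi(1,0)=\pi(P_{1})$ as you claim; the commutator identity $P_{1}\tilde{A}-\tilde{A}P_{1}=-\tilde{A}$ then finishes the argument. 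Rosenblum's own argument instead works with the generalized derivation $\tau(X)=T_{2}X-XT_{1}$: writing $\tau=L_{T_{2}}-R_{T_{1}}$ as a difference of commuting left and right multiplications, one gets the spectral inclusion $\sigma(\tau)\subseteq\sigma(T_{2})-\sigma(T_{1})$, and applying this to the induced operator on the quotient $\mathcal{B}(\mathcal{H}_{1},\mathcal{H}_{2})/\mathcal{K}(\mathcal{H}_{1},\mathcal{H}_{2})$ (where the relevant spectra become the essential spectra) shows that this induced operator is invertible; since the class of $A$ lies in its kernel, $A$ is compact. The trade-off: your argument is self-contained at the level of the Riesz idempotent and needs nothing about spectra of derivations, while Rosenblum's yields strictly more, namely solvability of $T_{2}X-XT_{1}=Q$ modulo compacts for every $Q$ (invertibility of the induced derivation), of which the lemma is just the ``trivial kernel'' special case.
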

The following proposition may not have been noticed before and will be used in Theorem \ref{22}.
\begin{proposition}\label{13}
Let $A\in \mathcal{B(H)}$. If $I\in\{K\in \mathcal{K(H)}:KA=AK,\;K^{*}=K\}^{-SOT}$ then $A$ is block diagonal.
\end{proposition}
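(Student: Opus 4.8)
The plan is to show that $\mathcal{H}$ is the closed linear span of the finite-dimensional reducing subspaces of $A$ and to read off block diagonality from this. Call a subspace \emph{good} if it is finite-dimensional and reducing for $A$, equivalently if the orthogonal projection onto it is finite-rank and commutes with $A$. If $\mathcal{M}_{1},\mathcal{M}_{2}$ are good, then $\mathcal{M}_{1}+\mathcal{M}_{2}$ is finite-dimensional, invariant under $A$ (a sum of $A$-invariant subspaces) and invariant under $A^{*}$ (a sum of $A^{*}$-invariant subspaces), hence good; so the family of good subspaces is directed by inclusion. Let $\mathcal{N}$ be the closure of the union of all good subspaces. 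Then $\mathcal{N}$ is reducing, since the projection onto it is an SOT-limit of projections commuting with $A$. Because $\mathcal{H}$ is separable I can extract an increasing sequence $\mathcal{N}_{1}\subseteq\mathcal{N}_{2}\subseteq\cdots$ of good subspaces whose union is dense in $\mathcal{N}$; the orthogonal differences $\mathcal{N}_{k}\ominus\mathcal{N}_{k-1}$ are finite-dimensional and reducing (their projections are differences of commuting projections), and $\mathcal{N}=\bigoplus_{k}(\mathcal{N}_{k}\ominus\mathcal{N}_{k-1})$. Consequently $A|_{\mathcal{N}}$ is block diagonal, and the whole problem reduces to proving $\mathcal{N}=\mathcal{H}$.

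For the main step I would fix $\xi\in\mathcal{N}^{\perp}$ and show $\xi=0$. By hypothesis $I$ lies in the SOT-closure of $\{K\in\mathcal{K(H)}:KA=AK,\,K^{*}=K\}$, so for every $\delta>0$ there is a self-adjoint compact $K$ commuting with $A$ with $\|K\xi-\xi\|<\delta$. Let $P=\chi_{\{|t|\ge 1/2\}}(K)$ be the spectral projection of $K$ for the set $\{|t|\ge 1/2\}$. Since $K$ is compact and self-adjoint, this set meets $\sigma(K)$ in finitely many eigenvalues of finite multiplicity, so $P$ is finite-rank. Moreover every nonzero point of $\sigma(K)$ is isolated, so $\chi_{\{|t|\ge 1/2\}}$ is continuous on $\sigma(K)$ and vanishes at $0$; hence it is a uniform limit on $\sigma(K)$ of polynomials without constant term, and $P=\lim_{n}p_{n}(K)$ in norm with each $p_{n}(K)$ commuting with $A$. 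Therefore $P$ commutes with $A$ and its range is a good subspace, so $\operatorname{ran}P\subseteq\mathcal{N}$ and hence $P\xi=0$.

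It remains to turn $P\xi=0$ into an estimate. Because $P$ commutes with $K$ and $\|K(I-P)\|\le\tfrac12$, using $(I-P)K\xi=K(I-P)\xi$ I get $\|(I-P)\xi\|\le\|(I-P)(\xi-K\xi)\|+\|(I-P)K\xi\|\le\|\xi-K\xi\|+\tfrac12\|(I-P)\xi\|$, and rearranging yields $\|(I-P)\xi\|\le 2\|\xi-K\xi\|$. Since $P\xi=0$ this reads $\|\xi\|\le 2\|K\xi-\xi\|<2\delta$, and letting $\delta\to 0$ forces $\xi=0$. Thus $\mathcal{N}^{\perp}=\{0\}$, i.e.\ $\mathcal{N}=\mathcal{H}$, and $A$ is block diagonal. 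The step I expect to require the most care is the verification that $P$ genuinely commutes with $A$ even though $A$ need not be normal: this is exactly where I use that $\chi_{\{|t|\ge 1/2\}}$ is a uniform limit of polynomials vanishing at $0$ on $\sigma(K)$, so that the relevant functional calculus stays inside the norm-closed algebra generated by $K$, every element of which commutes with $A$.
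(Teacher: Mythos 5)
Your proof is correct, and it shares the paper's key mechanism --- for a self-adjoint compact $K$ commuting with $A$, the spectral projection of $K$ away from a neighborhood of $0$ is finite rank, commutes with $A$, and almost fixes any vector that $K$ almost fixes --- but you deploy that mechanism inside a genuinely different architecture. For the commutation of the spectral projection with $A$, the paper observes that it is a sum of projections onto the eigenspaces $\ker(K-\lambda I)$, $\lambda\neq 0$, each of which reduces $A$ because $A$ commutes with $K=K^{*}$; your functional-calculus argument (uniform polynomial approximation of $\chi_{\{|t|\geq 1/2\}}$ on $\sigma(K)$) is equally valid, though the restriction to polynomials without constant term is unnecessary for commutation, since any polynomial in $K$ commutes with $A$; it merely gives an alternative reason that $P$ is compact, hence finite rank. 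For the passage to block diagonality, the paper proves that its spectral projections converge to $I$ in SOT (via the estimate $\|x-P_{\alpha}x\|\leq\|x-K_{\alpha}x\|+\epsilon_{\alpha}\|x\|$, the analogue of your $\|(I-P)\xi\|\leq 2\|\xi-K\xi\|$), extracts a sequence from the net using uniform boundedness and separability of $\mathcal{H}$, and then nests the ranges to produce an increasing sequence of finite-rank reducing projections tending to $I$. You instead form the closed span $\mathcal{N}$ of all finite-dimensional reducing subspaces, obtain block diagonality of $A|_{\mathcal{N}}$ from directedness plus separability, and prove $\mathcal{N}=\mathcal{H}$ by a one-vector-at-a-time orthocomplement argument: $P\xi=0$ for $\xi\in\mathcal{N}^{\perp}$ turns the estimate into $\|\xi\|<2\delta$, forcing $\xi=0$. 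What your route buys: it bypasses the net-to-sequence extraction entirely, and it isolates a clean general fact, namely that the closed span of the finite-dimensional reducing subspaces is, in effect, the largest reducing subspace on which an operator is block diagonal. What the paper's route buys: it is more constructive, exhibiting explicitly the nested finite-rank projections $Q_{n}\uparrow I$ commuting with $A$ that witness block diagonality.
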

\begin{proof}
By assumption, there is a net $(K_{\alpha})$ of self-adjoint compact operators on $\mathcal{H}$ commuting with $A$ converging in SOT to $I$. Let $(\epsilon_{\alpha})$ be a
net of positive numbers converging to 0 (e.g., take $\epsilon_{\alpha}=\sum_{n=1}^{\infty}\frac{1}{2^{n}}\min(\|(K_{\alpha}-I)x_{n}\|,1)$ where $(x_{n})_{n\geq 1}$ is a
dense sequence in the unit ball of $\mathcal{H}$.)

Let $E$ be the spectral measure of $K_{\alpha}$ and let $P_{\alpha}=E(\mathbb{R}\backslash[-\epsilon_{\alpha},\epsilon_{\alpha}])$, the spectral projection of the given
set. Note that $P_{\alpha}$ commutes with $A$, since $P_{\alpha}$ is a sum of orthogonal projections onto $\text{ker}(K_{\alpha}-\lambda I)$ and $K_{\alpha}=K_{\alpha}^{*
}$ commutes with $A$. Since $P_{\alpha}x$ is the best approximation of $x$ by elements of $P_{\alpha}\mathcal{H}$,
\begin{eqnarray*}
\|x-P_{\alpha}x\|\leq\|x-P_{\alpha}K_{\alpha}x\|&\leq&\|x-K_{\alpha}x\|+\|(I-P_{\alpha})K_{\alpha}x\|\\&=&\|x-K_{\alpha}x\|+\|E([-\epsilon_{\alpha},\epsilon_{\alpha}])K_{n
}x\|\\&\leq&\|x-K_{\alpha}x\|+\epsilon_{\alpha}\|x\|\to 0,
\end{eqnarray*}
for all $x\in\mathcal{H}$. Thus, $P_{\alpha}\to I$ in SOT. But $P_{\alpha}$ commutes with $A$. Therefore,
\[I\in\{P\in\mathcal{B(H)}:P\text{ is a finite rank orthogonal projection and }PA=AP\}^{-SOT}.\]
Since this set of $P$ is uniformly bounded, there exists a sequence $(P_{n})_{n\geq 1}$ of finite rank orthogonal projection converging in SOT to $I\in\mathcal{B(H)}$ and
commuting with $A$. Let $Q_{n}$ be the orthogonal projection from $\mathcal{H}$ onto the closed subspace of $\mathcal{H}$ generated by $P_{1}\mathcal{H}\cup\ldots\cup
P_{n}\mathcal{H}$. Then $Q_{n}\to I$ in SOT, and $Q_{k}\leq Q_{k+1}$ and $Q_{k}$ commutes with $A$ for all $k\geq 1$. Hence, $A$ is block diagonal.
\end{proof}
\begin{remark}
One consequence of this result is that a reducing part of a block diagonal operator is also block diagonal, which is perhaps a known fact.
\end{remark}
\subsection*{B. Theory of Banach spaces}
Two Banach spaces $X$ and $Y$ are {\it isomorphic} if there is an {\it isomorphism} from $X$ onto $Y$, i.e., a linear homeomorphism from $X$ onto $Y$. A subspace $Z$ of
$X$ is said to be {\it complemented} if there is an idempotent  from $X$ onto $Z$. We say that the Banach space $X$ has the {\it bounded compact approximation property}
({\it BCAP}) if there is a uniformly bounded net of compact operators on $X$ converging in SOT to $I$.

General results in operator theory can be found in \cite{Douglas} and \cite{Radjavi}. For an introduction to compalence of operators, the reader is referred to
\cite{Pearcy}. General results in the theory of Banach spaces can be found in \cite{LinTza} and \cite{Diestel}.
\section{Universal Banach spaces and universal operators}\label{2}
The motivation for the work in this section derives from the following classical problem in the theory of Banach spaces.
\begin{problem}\label{21p}
For a given class $\mathcal{C}$ of separable Banach spaces, does there exist a separable Banach space $X$ such that every Banach space in $\mathcal{C}$ is isomorphic to a
complemented subspace of $X$?
\end{problem}
The Replacement Rule introduced at the beginning of this paper suggests that an analog of this problem in operator theory could be
\begin{problem}\label{22p}
For a given class $\mathcal{C}$ of uniformly bounded operators in $\mathcal{B(H)}$ (i.e., $\sup\{\|T\|:T\in\mathcal{C}\}<\infty$), does there exist an operator $T\in
\mathcal{B(H)}$ such that every operator in $\mathcal{C}$ is unitarily equivalent to a reducing part of $T$?
\end{problem}
The answer is trivially yes, if $\mathcal{C}$ is countable, by considering the direct sum of all operators in $\mathcal{C}$. On the other hand, the answer is no even
for the class $\{\alpha I:\alpha\in [0,1]\}$ (which is uncountable). To see this, suppose that $T\in\mathcal{B(H)}$ and that for every $\alpha\in [0,1]$, there is an
infinite dimensional reducing subspace $\mathcal{H}_{\alpha}$ for $T$ such that $T|_{\mathcal{H}_{\alpha}}=\alpha I$. Letting $P_{\alpha}$ be the orthogonal projection
onto $\mathcal{H}_{\alpha}$, we have
\[TP_{\alpha}P_{\beta}=T|_{\mathcal{H}_{\alpha}}P_{\alpha}P_{\beta}=\alpha P_{\alpha}P_{\beta}.\]
Similarly, we have $P_{\alpha}TP_{\beta}=\beta P_{\alpha}P_{\beta}$. But since $T$ commutes with $P_{\alpha}$, it follows that $P_{\alpha}P_{\beta}=0$ if $\alpha\neq
\beta$. Therefore, $\mathcal{H}_{\alpha}\perp\mathcal{H}_{\beta}$ if $\alpha\neq\beta$. Since there are uncountably many $\alpha$, this implies that $\mathcal{H}$ is not
separable, which is a contradiction.

In general, the answer to Problem \ref{22p} is no. Thus, we might obtain a more interesting problem if we replace unitary equivalence with a weaker equivalence, namely
with compalence.
\begin{problem}\label{23p}
For a given class $\mathcal{C}$ of uniformly bounded operators in $\mathcal{B(H)}$, does there exist an operator $T\in\mathcal{B(H)}$ such that every operator in
$\mathcal{C}$ is compalent to a reducing part of $T$?
\end{problem}
Is there, for instance, an operator $T\in\mathcal{B(H)}$ for which every multiple of $I\in\mathcal{B(H)}$ by a scalar in $[0,1]$ is compalent to a reducing part of $T$?

The answer is yes. An example is given by a diagonal operator $T$ with diagonal entries $\alpha_{1},\alpha_{2},\ldots$ in $\mathbb{R}$ satisfying $\{\alpha_{n}:n\geq 1\}^{
-}=[0,1]$. Then for each $\alpha\in [0,1]$, there is a subsequence $(\alpha_{n_{k}})_{k\geq 1}$ converging to $\alpha$. Hence, $\alpha I$ is a compact perturbation of a
diagonal operator $B$ with diagonal entries $\alpha_{n_{1}},\alpha_{n_{2}},\ldots$. But $B$ is (unitarily equivalent to) a reducing part of $T$, and therefore, $\alpha I$
is compalent to a reducing part of $T$. Is there an operator $T\in\mathcal{B(H)}$ for which every multiple of the unilateral shift (of multiplicity 1) by a scalar in
$[0,1]$ is compalent to a reducing part of $T$? (See Corollary \ref{27} below.) What about the bilateral shift?

For the class $(CQD)$ of contractive quasidiagonal operators, we give below an affirmative answer to Problem \ref{23p}. This yields, in particular, an affirmative answer
to the preceding question about the bilateral shift, since every normal operator is quasidiagonal \cite[page 903]{Halmos}.
\begin{theorem}\label{21}
There is a contractive quasidiagonal operator $T\in \mathcal{B(H)}$ such that every contractive quasidiagonal operator is compalent to a reducing part of $T$.
\end{theorem}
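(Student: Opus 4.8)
The plan is to reduce everything to block diagonal operators and then build $T$ as a single block diagonal operator that contains, with infinite multiplicity, a countable dense family of finite-dimensional contractions, letting the compact slack built into compalence absorb the approximation error. First I would invoke Lemma \ref{11}: any contractive quasidiagonal $S$ can be written as $S=B+K_{0}$ with $B$ contractive block diagonal and $K_{0}$ compact. Since compalence is an equivalence relation that is visibly invariant under compact perturbations, it suffices to produce a single contractive quasidiagonal $T$ for which every contractive \emph{block diagonal} operator is compalent to a reducing part of $T$.

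For the construction, note that for each $k\geq 1$ the set of contractions on $\mathbb{C}^{k}$ is the closed unit ball of $M_{k}(\mathbb{C})$, hence compact and separable; I would fix a countable dense subset $\mathcal{D}_{k}$ and set $\mathcal{D}=\bigcup_{k\geq 1}\mathcal{D}_{k}$, a countable set of finite-dimensional contractions. Define
\[T=\bigoplus_{F\in\mathcal{D}}F^{(\infty)},\]
the direct sum over $\mathcal{D}$ of countably many copies of each $F$. This acts on a separable space and is a countable direct sum of finite-dimensional blocks each of norm $\leq 1$, hence a contractive block diagonal, in particular contractive quasidiagonal, operator.

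For the verification, given a contractive block diagonal $B\cong\bigoplus_{n\geq 1}F_{n}$ with $F_{n}$ a contraction on $\mathbb{C}^{k_{n}}$, I would choose for each $n$ some $G_{n}\in\mathcal{D}_{k_{n}}$ with $\|F_{n}-G_{n}\|<1/n$. Then $B':=\bigoplus_{n}G_{n}$ is unitarily equivalent to a reducing part of $T$: for each $F\in\mathcal{D}$ the number of indices $n$ with $G_{n}=F$ is at most $\aleph_{0}$, which does not exceed the multiplicity of $F$ in $T$, so one selects the corresponding copies and takes the (reducing) closed span $\mathcal{M}$ of the associated finite-dimensional subspaces, giving $T|_{\mathcal{M}}\cong B'$ after reindexing. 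Finally $B-B'=\bigoplus_{n}(F_{n}-G_{n})$ is a direct sum of finite-rank blocks whose norms tend to $0$, hence a norm limit of finite-rank operators and so compact; writing $T|_{\mathcal{M}}=WB'W^{*}=WBW^{*}-W(B-B')W^{*}$ for the implementing unitary $W$ exhibits $B\stackrel{c}{\sim}T|_{\mathcal{M}}$, and since $S-B$ is compact the same unitary yields $S\stackrel{c}{\sim}T|_{\mathcal{M}}$.

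The one genuine point, as opposed to routine bookkeeping, is reconciling the separability of $\mathcal{H}$ with the fact that there are uncountably many finite-dimensional contractions: one cannot literally include every block, so the argument must exploit that compalence tolerates a compact error, which is precisely what lets a countable dense family suffice. This is the operator-theoretic shadow of the approximation mechanism in the Banach space universality theorems, and the only thing to check with care is that the accumulated block-wise errors, although not summable, still assemble into a compact operator because the individual blocks are finite rank with norms tending to zero.
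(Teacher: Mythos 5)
Your proof is correct and takes essentially the same route as the paper: reduce to block diagonal operators via Lemma \ref{11}, build $T$ as a countable direct sum drawn from dense families of finite-dimensional contractions, and let the compact error permitted by compalence absorb the block-wise approximation. The only difference is bookkeeping: the paper sums a dense sequence $(T_{i,n})_{i\geq 1}$ in each matrix unit ball and leaves the embedding step as ``easily checked,'' whereas your infinite-multiplicity construction $\bigoplus_{F\in\mathcal{D}}F^{(\infty)}$ makes the required injective assignment of blocks explicit --- a clean way to justify exactly that step.
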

\begin{proof}
For each $n\geq 1$, let $(T_{i,n})_{i\geq 1}$ be a dense sequence in the unit ball of $\mathcal{B}(\mathcal{H}_{n})$, where $\mathcal{H}_{n}$ is an $n$-dimensional Hilbert
space. Then set
\[T=\bigoplus_{i,n\geq 1}T_{i,n}.\]
If $A$ is a contractive quasidiagonal operator then by Lemma \ref{11}, $A$ is a compact perturbation of a contractive block diagonal operator $B$. It can be easily
checked that $B$ is compalent to a reducing part of $T$. Therefore, $A$ is compalent to a reducing part of $T$.
\end{proof}
\begin{remark}
The construction of $T$ in Theorem \ref{21} is the same as the construction of the universal operator in \cite[Corollary 4.2]{Herrero}. But the notion
of universality in \cite[Corollary 4.2]{Herrero}, when restricted to $(CQD)$, is weaker than that in Theorem \ref{21}.
\end{remark}
The main result of this section is that under an additional assumption, the quasidiagonal operators actually characterize the existence of a universal operator.
\begin{theorem}\label{22}
Suppose that $\mathcal{C}$ is a collection of uniformly bounded operators in $\mathcal{B(H)}$ such that $\sigma_{e}(S_{1})\cap\sigma_{e}(S_{2})=\emptyset$ for all $S_{1},
S_{2}\in\mathcal{C},\,S_{1}\neq S_{2}$. Then the following statements are equivalent.
\begin{enumerate}[(i)]
\item There exists an operator $T\in\mathcal{B(H)}$ such that every operator in $\mathcal{C}$ is compalent to a reducing part of $T$.
\item Every operator in $\mathcal{C}$ outside a countable subset is quasidiagonal.
\end{enumerate}
\end{theorem}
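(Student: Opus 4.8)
The plan is to treat the two implications separately; only the forward implication (i)$\Rightarrow$(ii) uses the hypothesis on essential spectra, and it is where the adaptation of Johnson and Szankowski enters. For the easy direction (ii)$\Rightarrow$(i) I would argue straight from Theorem \ref{21}. Write $\mathcal{C}=\mathcal{C}_{0}\cup\mathcal{C}_{1}$ with $\mathcal{C}_{0}$ countable and every operator in $\mathcal{C}_{1}$ quasidiagonal, and let $M$ be a uniform bound for $\mathcal{C}$. After rescaling by $1/M$, every operator in $\mathcal{C}_{1}$ becomes a contractive quasidiagonal operator, so Theorem \ref{21} furnishes a single contractive quasidiagonal $T_{1}$ to a reducing part of which each of them is compalent; undoing the scaling replaces $T_{1}$ by $MT_{1}$. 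Taking $T=MT_{1}\oplus\bigoplus_{B\in\mathcal{C}_{0}}B$, a countable orthogonal direct sum acting on a separable space, every member of $\mathcal{C}$ is compalent to a reducing part of $T$, since a reducing part of a summand is a reducing part of $T$. Note that the disjointness hypothesis plays no role here.

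The substance is in (i)$\Rightarrow$(ii). First I would record the standing reductions. For each $S\in\mathcal{C}$ the compalence with a reducing part gives an infinite dimensional reducing subspace $\mathcal{M}_{S}$ of $\mathcal{H}$ (infinite dimensional because compalence requires a unitary $\mathcal{M}_{S}\to\mathcal{H}$), with orthogonal projection $P_{S}$ commuting with $T$ and $S\stackrel{c}{\sim}T_{S}$, where $T_{S}:=T|_{\mathcal{M}_{S}}$. Since quasidiagonality and the essential spectrum are both invariant under compalence, it suffices to show that $T_{S}$ is quasidiagonal for all but countably many $S$, and the hypothesis becomes $\sigma_{e}(T_{S})\cap\sigma_{e}(T_{S'})=\emptyset$ for $S\neq S'$. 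The crucial structural fact is that for $S\neq S'$ the map $P_{S'}|_{\mathcal{M}_{S}}\colon\mathcal{M}_{S}\to\mathcal{M}_{S'}$ intertwines $T_{S}$ and $T_{S'}$ (both projections commute with $T$), so by Lemma \ref{12} it is compact; hence $P_{S}P_{S'}\in\mathcal{K}(\mathcal{H})$ for $S\neq S'$, i.e. the family $\{P_{S}\}$ is pairwise orthogonal modulo the compacts. After rescaling I may assume all $T_{S}$ are contractions.

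Next I would set up the approximation criterion and the counting skeleton. Proposition \ref{13} expresses block diagonality of $T_{S}$ as the condition $I_{\mathcal{M}_{S}}\in\{K\in\mathcal{K}(\mathcal{M}_{S}):K=K^{*},\,KT_{S}=T_{S}K\}^{-SOT}$, and via Lemma \ref{11} quasidiagonality is the compact perturbation of this; so failure of quasidiagonality of $T_{S}$ should be recast as a quantitative, level-$\epsilon_{S}$ failure of such an approximation. Assuming for contradiction that uncountably many $T_{S}$ are not quasidiagonal, a pigeonhole on $\epsilon_{S}$ yields a fixed $\epsilon>0$ and an uncountable subfamily failing uniformly at level $\epsilon$. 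The engine of the contradiction is the elementary fact that a separable metric space contains no uncountable $\epsilon$-separated set. The remaining and principal task, which is precisely the transcription of the argument in \cite{Johnson} through the Replacement Rule, is to attach to each such $S$ a witness of its non-quasidiagonality lying in a \emph{fixed separable} object -- naturally the separable C$^{*}$-algebra $C^{*}(T)+\mathcal{K}(\mathcal{H})$, or $\mathcal{K}(\mathcal{H})$ in norm -- in such a way that distinct $S$ yield $\epsilon$-separated witnesses, with the pairwise orthogonality modulo compacts of $\{P_{S}\}$ making these witnesses simultaneously definable and mutually separated.

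The hard part will be exactly this last encoding. Pairwise orthogonality modulo the compacts only places $\{\pi(P_{S})\}$ among the pairwise orthogonal projections of the Calkin algebra, and that algebra is non-separable and genuinely admits uncountable orthogonal families -- for instance the projections onto the spans of an almost disjoint family of subsets of a fixed orthonormal basis -- so essential orthogonality by itself produces no contradiction. It is non-quasidiagonality that must be leveraged to drive the witnesses into a separable quotient in which the $\epsilon$-separation survives, mirroring the way Johnson and Szankowski convert failure of the bounded approximation property into separated data inside a separable trace object. Verifying that this separation is genuinely uniform in $S$, namely that the single level $\epsilon$ can be fixed \emph{before} the witnesses are selected, is the delicate point on which the whole implication rests.
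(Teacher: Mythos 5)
Your reductions are exactly the paper's: the easy direction via Theorem \ref{21}, the passage to the reducing parts $T_{S}=T|_{\mathcal{M}_{S}}$, compactness of $P_{S'}|_{\mathcal{M}_{S}}$ from Lemma \ref{12}, the recasting of non-quasidiagonality of $T_{S}$ as a failure of SOT-approximation via Proposition \ref{13}, the pigeonhole on the witness data, and the use of separability against uncountability. But the proposal stops precisely at the step that produces the contradiction, and the completion you sketch points in the wrong direction. The witnesses handed to you by Proposition \ref{13} and the definition of SOT are already concrete: finite tuples of unit vectors $(x_{i}^{S})_{i=1}^{n(S)}$ in $\mathcal{M}_{S}\subset\mathcal{H}$ and $\epsilon_{S}>0$ such that no self-adjoint compact operator $K$ on $\mathcal{M}_{S}$ commuting with $T_{S}$ satisfies $\|x_{i}^{S}-Kx_{i}^{S}\|<\epsilon_{S}$ for all $i$. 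These witnesses live in the fixed separable object $\mathcal{H}$ itself (tuples in the unit sphere of $\mathcal{H}^{n}$ once $n(S)=n$ is made constant by pigeonhole); no separable $C^{*}$-quotient and no appeal to $C^{*}(T)+\mathcal{K}(\mathcal{H})$ is needed or used.

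Given that, the contradiction is one construction away (this is the paper's Lemma \ref{25}, the transcription of Lemma \ref{24}). By separability choose $\beta\neq\gamma$ in the uncountable subfamily with $\|x_{i}^{\beta}-x_{i}^{\gamma}\|<\epsilon\leq\epsilon_{\gamma}$ for all $i$, and set $K_{0}:=P_{\gamma}P_{\beta}|_{\mathcal{M}_{\gamma}}\in\mathcal{B}(\mathcal{M}_{\gamma})$. Then $K_{0}$ is self-adjoint; it commutes with $T_{\gamma}$ because $P_{\beta}$ and $P_{\gamma}$ commute with $T$; and it is compact because $P_{\beta}|_{\mathcal{M}_{\gamma}}$ intertwines $T_{\gamma}$ and $T_{\beta}$ and is therefore compact by Lemma \ref{12}. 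So the essential orthogonality you established is used \emph{positively}, to manufacture a compact commuting operator, not to forbid uncountable orthogonal families in the Calkin algebra (which, as you correctly note, it cannot). Finally, using $P_{\gamma}x_{i}^{\gamma}=x_{i}^{\gamma}$ and $P_{\beta}x_{i}^{\beta}=x_{i}^{\beta}$,
\[
\|x_{i}^{\gamma}-K_{0}x_{i}^{\gamma}\|=\|P_{\gamma}(I-P_{\beta})x_{i}^{\gamma}\|\leq\|(I-P_{\beta})(x_{i}^{\gamma}-x_{i}^{\beta})\|\leq\|x_{i}^{\gamma}-x_{i}^{\beta}\|<\epsilon\leq\epsilon_{\gamma},
\]
contradicting the choice of $(x_{i}^{\gamma})_{i=1}^{n}$. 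Your closing worry---that the level $\epsilon$ must be fixable ``before'' the witnesses are selected---is empty: the witnesses and the $\epsilon_{S}$ are chosen separately for each $S$, and the pigeonhole you already invoked (constant $n$, $\inf_{S}\epsilon_{S}=\epsilon>0$ on an uncountable subfamily) supplies all the uniformity the argument needs.
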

\begin{remarks}
In Theorem \ref{22}, we can replace uniform boundedness of $\mathcal{C}$ by essential uniform boundedness, i.e., $\sup\{\|T\|_{e}:T\in\mathcal{C}\}<\infty$. A slightly
stronger statement of Theorem \ref{21} is also true: There is a contractive quasidiagonal operator $T\in \mathcal{B(H)}$ such that every quasidiagonal operator with
essential norm at most 1 is compalent to a reducing part of $T$.
\end{remarks}
The universality result mentioned in the introduction which led to Theorem \ref{22} is the following.
\begin{theorem}[\cite{Johnson}, Section II]\label{23}
There is no separable Banach space $X$ such that every separable Banach space is isomorphic to a complemented subspace of $X$.
\end{theorem}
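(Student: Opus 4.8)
The plan is to argue by contradiction, following the strategy that the Replacement Rule makes transparent: the role played here by block diagonal operators (equivalently, by the net of commuting finite rank projections produced in Proposition \ref{13}) will be played by the \emph{bounded approximation property} (BAP), i.e.\ the existence of a uniformly bounded net of finite rank operators converging to $I$ in SOT. So suppose $X$ is a separable Banach space such that every separable Banach space is isomorphic to a complemented subspace of $X$.

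First I would uniformize the complementation constant. Parametrizing the separable Banach spaces (for instance as the closed infinite dimensional subspaces of a fixed isometrically universal space such as $C[0,1]$, or as closed linear spans of sequences in $X$) and writing this class as the increasing union over $n$ of those spaces admitting a complemented copy in $X$ with combined isomorphism-and-projection constant at most $n$, a Baire category argument should produce a single $\lambda\geq 1$ such that every separable Banach space $Y$ is isomorphic to a $\lambda$-complemented subspace of $X$. This is the Banach space analog of passing from ``compalent to a reducing part of $T$'' to a quantitatively controlled version.

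The soft part of the argument is then the following observation. If $X$ itself had the BAP, every complemented subspace of $X$ would inherit it: transferring the approximating finite rank operators through the isomorphism and the bounded projection yields uniformly bounded finite rank operators on $Y$ converging to $I_{Y}$ in SOT. Since Enflo's example furnishes a separable Banach space \emph{without} the BAP, and such a space is by hypothesis complemented in $X$, this would already be a contradiction. The essential difficulty, and the reason the theorem lies deeper than Enflo's example alone, is that universality of $X$ does \emph{not} force $X$ to have the BAP: it only asserts that each separable space has \emph{some} complemented copy in $X$, not that the finite dimensional subspaces exhausting $X$ are uniformly complemented, so one cannot manufacture a bounded net of finite rank operators converging to $I_{X}$.

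The main obstacle is therefore to reach a contradiction while allowing $X$ to fail the BAP, and this is where I would invoke the quantitative, finite dimensional form of the failure of the approximation property due to Davie: there are finite dimensional Banach spaces $E_{n}$ whose local approximation constants tend to infinity, in the sense that approximating the identity of $E_{n}$ by operators of controlled rank forces the norm to grow with $n$. Assembling a separable space $W$ out of the $E_{n}$ and using that $W$ is $\lambda$-complemented in $X$, I would run a trace-duality estimate at each finite dimensional level to show that the single uniform constant $\lambda$ would bound all the local approximation constants of the $E_{n}$ simultaneously, contradicting their unbounded growth. Converting the one global complemented embedding of $W$ into these uniform finite dimensional bounds---without the crutch of a bounded net of finite rank operators on $X$---is the technical heart of the proof, and it is precisely this local argument, rather than any soft SOT limit, whose operator-theoretic analog is adapted via the Replacement Rule to establish Theorem \ref{22}.
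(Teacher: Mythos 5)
Your diagnosis of the difficulty is exactly right: since $X$ itself need not have any approximation property, one cannot simply transfer BAP (or BCAP) from $X$ to a complemented copy of Enflo's space. But the fix you propose runs into the very same wall. In your final step, the only information you retain from the universality hypothesis is that the single space $W$ (assembled from Davie's pieces $E_n$) is $\lambda$-complemented in the separable space $X$. That statement by itself is satisfiable --- take $X=W$ and $\lambda=1$ --- so no contradiction, trace-dual or otherwise, can be derived from it alone. Put differently, any bound on the local approximation constants of the $E_n$ that follows from ``$W$ embeds $\lambda$-complementably in some separable space'' would already follow from ``$W$ is separable,'' and hence would assert that every such assembled space has a uniform approximation property, which is false by the very choice of the $E_n$. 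A valid proof by contradiction must use the universality of $X$ --- the simultaneous presence of many different complemented copies inside one separable space --- beyond the single embedding of $W$; your third step never does.

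That simultaneous presence is precisely what the argument extracted in Lemma \ref{24} of the paper exploits, and it is a counting argument rather than a local or trace-duality one. One takes an uncountable family $\{E_p\}_{1<p<\infty}$ of separable spaces such that each $E_p$ fails the BCAP and every operator from $E_r$ into $E_q$ ($q<r$) is compact. If all $E_p$ were complemented in $X$, a pigeonhole on the uncountable index set (not Baire category --- your first step is both delicate to justify over the class of all separable spaces and unnecessary) gives an uncountable subfamily with projections $Q_p$ of norm at most $M$; for each such $p$, failure of BCAP yields finitely many unit witness vectors $y_i^p$ and $\epsilon_p>0$ such that no compact operator of norm at most $M^2$ nearly fixes all the $y_i^p$, and a further pigeonhole makes the number of witnesses and a lower bound $\epsilon$ for the $\epsilon_p$ constant on an uncountable set $\mathscr{B}$. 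Only now does separability of $X$ enter, and it does so essentially: two indices $q<r$ in $\mathscr{B}$ must have witness vectors within $(M+M^2)^{-1}\epsilon$ of each other, whence $K_0=Q_rQ_q|_{Y_r}$ is compact (this is where compactness of operators from $E_r$ to $E_q$ is used --- the analog, under the Replacement Rule, of Lemma \ref{12}), has norm at most $M^2$, and moves each $y_i^r$ by less than $\epsilon_r$, contradicting the choice of the witnesses. The contradiction thus comes from the forced interaction of two distinct complemented copies inside the separable $X$; no estimate extracted from one complemented embedding can replace it.
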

The proof in \cite{Johnson} uses the following fact about Banach spaces: There are separable Banach spaces $E_{p}$ where $1<p<\infty$ such that (a) $E_{p}$ fails the BCAP
for every $1<p<\infty$ and (b) if $q<r$ then every operator from a subspace of $E_{r}$ into $E_{q}$ is compact. Then the result follows from the following lemma. (This
lemma is in fact not stated in \cite{Johnson} but is extracted from the original proof of Theorem \ref{23} in \cite{Johnson}.)
\begin{lemma}\label{24}
Suppose that $E_{p}$ is a separable Banach space where $1<p<\infty$ such that
\begin{enumerate}[(a)]
\item $E_{p}$ fails the BCAP for each $1<p<\infty$ and
\item if $q<r$ then every operator from $E_{r}$ to $E_{q}$ is compact.
\end{enumerate}
Then there is no separable Banach space $X$ such that for every $1<p<\infty$, $E_{p}$ is isomorphic to a complemented subspace of $X$.
\end{lemma}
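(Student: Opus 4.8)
The plan is to argue by contradiction. Suppose such a separable $X$ exists. For each $p\in(1,\infty)$ I would fix an isomorphism $u_{p}$ of $E_{p}$ onto a complemented subspace $Z_{p}\subseteq X$ together with a bounded idempotent $P_{p}\in\mathcal{B}(X)$ having range $Z_{p}$, and set $v_{p}=u_{p}^{-1}P_{p}:X\to E_{p}$, so that $v_{p}u_{p}=I_{E_{p}}$ and $u_{p}v_{p}=P_{p}$. The goal is to manufacture, for one suitably chosen index $p_{0}$, a uniformly bounded sequence of compact operators on $E_{p_{0}}$ converging to $I_{E_{p_{0}}}$ in SOT; this will contradict (a). The building blocks will be the operators $w=v_{p_{0}}P_{p}u_{p_{0}}:E_{p_{0}}\to E_{p_{0}}$ taken with $p<p_{0}$. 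Writing $P_{p}=u_{p}v_{p}$ gives $w=(v_{p_{0}}u_{p})(v_{p}u_{p_{0}})$, and since $p<p_{0}$ the factor $v_{p}u_{p_{0}}:E_{p_{0}}\to E_{p}$ is compact by hypothesis (b); hence each such $w$ is compact.

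First I would arrange uniform bounds on the idempotents. Since $(1,\infty)=\bigcup_{M\in\NN}\{p:\|P_{p}\|\le M\}$ expresses an uncountable set as a countable union, some $\Lambda\subseteq(1,\infty)$ is uncountable with $\sup_{p\in\Lambda}\|P_{p}\|\le M$. With $p_{0}$ fixed, the operators $w=v_{p_{0}}P_{p}u_{p_{0}}$ for $p\in\Lambda$ are then uniformly bounded by $\|v_{p_{0}}\|\,\|u_{p_{0}}\|\,M$.

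The heart of the argument is a selection step that uses separability of $X$. Because $X$ is separable, the ball $\{S\in\mathcal{B}(X):\|S\|\le M\}$ is metrizable and separable in SOT. I would then establish the following one-sided accumulation fact for an arbitrary map $f$ from an uncountable $\Lambda\subseteq\mathbb{R}$ into a separable metric space $Y$: there exist $p_{0}\in\Lambda$ and a sequence $p_{n}\in\Lambda$ with $p_{n}\uparrow p_{0}$ and $f(p_{n})\to f(p_{0})$. Applying this to $f(p)=P_{p}$ produces $p_{0}\in\Lambda$ and $p_{n}\uparrow p_{0}$ in $\Lambda$ with $P_{p_{n}}\to P_{p_{0}}$ in SOT. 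The insistence that the $p_{n}$ approach $p_{0}$ strictly from below is exactly what allows hypothesis (b) to be invoked in the correct direction, and this one-sidedness is the main obstacle: a bare condensation point of the family might accumulate only from the wrong side. I would handle it by first discarding the countably many points of $\Lambda$ that fail to be left-limit points, then, for each remaining ``bad'' $p_{0}$, choosing a basic ball $B\ni f(p_{0})$ off which $p_{0}$ is left-isolated in $f^{-1}(B)$; a pigeonhole over the countable base of $Y$ together with an injection of the resulting bad set into $\mathbb{Q}$ shows the bad set is countable, so a good $p_{0}$ exists.

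Finally I would put $w_{n}=v_{p_{0}}P_{p_{n}}u_{p_{0}}$. By the first two paragraphs these are compact and bounded by $\|v_{p_{0}}\|\,\|u_{p_{0}}\|\,M$, and for each $\xi\in E_{p_{0}}$ one has $w_{n}\xi=v_{p_{0}}P_{p_{n}}u_{p_{0}}\xi\to v_{p_{0}}P_{p_{0}}u_{p_{0}}\xi=v_{p_{0}}u_{p_{0}}\xi=\xi$, using $P_{p_{n}}\to P_{p_{0}}$ in SOT and continuity of $v_{p_{0}}$. Thus $(w_{n})$ is a uniformly bounded sequence of compact operators on $E_{p_{0}}$ converging to $I_{E_{p_{0}}}$ in SOT, so $E_{p_{0}}$ has the BCAP, contradicting (a). This contradiction would complete the proof.
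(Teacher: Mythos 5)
Your proof is correct, and it takes a genuinely different route from the paper's, although the two share a skeleton: both assume such an $X$ exists, pigeonhole to get an uncountable set of indices whose idempotents are uniformly bounded by some $M$, and obtain compactness from hypothesis (b) applied to the factor that maps into the smaller index (your $w_n=v_{p_0}P_{p_n}u_{p_0}=(v_{p_0}u_{p_n})(v_{p_n}u_{p_0})$ is exactly the paper's operator $Q_rQ_q|_{Y_r}$ transported to $E_{p_0}$). The difference lies in how the failure of BCAP is contradicted. The paper unpacks the negation of BCAP at each index into a finite witness set of unit vectors $(y_i^p)$ and a tolerance $\epsilon_p$, pigeonholes again so that the number of vectors is constant and $\inf\epsilon_p>0$ on an uncountable subset, and uses separability of $X$ only to find a \emph{single} pair $q<r$ with nearly equal witness vectors; the one compact operator $Q_rQ_q|_{Y_r}$ then violates the witness at $r$, and the needed orientation $q<r$ is free because any two distinct indices can simply be ordered. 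You instead treat the idempotents themselves as points of the SOT-separable, metrizable ball $\{S\in\mathcal{B}(X):\|S\|\le M\}$ and prove a one-sided condensation lemma: any map $f$ from an uncountable subset of $\mathbb{R}$ to a separable metric space admits a point $p_0$ and a strictly increasing sequence $p_n\uparrow p_0$ with $f(p_n)\to f(p_0)$. This hands you a uniformly bounded sequence of compact operators SOT-converging to $I_{E_{p_0}}$, i.e., it establishes BCAP for $E_{p_0}$ outright, with no quantitative bookkeeping. The trade-off: your endgame is cleaner, but the selection lemma is where the real work has migrated, and its one-sidedness (without which (b) would point the wrong way) is precisely the obstacle you identify; your countability argument for the bad set (left-isolation within $f^{-1}(B_k)$ for a basic ball $B_k$, injection into $\mathbb{Q}$, summed over a countable base) is a valid proof of it, whereas the paper's two-point, finitary variant dispatches the same orientation issue trivially. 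A minor bonus of your version is that it shows some $E_{p_0}$ actually has BCAP with constant $\|v_{p_0}\|\|u_{p_0}\|M$, not merely a contradiction with one finite witness.
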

\begin{proof}
Suppose, on the contrary, that there is a separable Banach space $X$ such that for every $1<p<\infty$, $E_{p}$ is isomorphic to a complemented subspace $Y_{p}$ of $X$.
Letting $Q_{p}$ be an idempotent from $X$ onto $Y_{p}$, we have that there exist $M\in\NN\,$ and an uncountable set $\mathscr{A}\subset (1,\infty)$ so that $\|Q_{p}\|
\leq M$ for each $p\in\mathscr{A}$.

For each $p\in\mathscr{A}$, since $E_{p}$ fails the BCAP, $Y_{p}$ fails the BCAP so $I\notin\{K:Y_{p}\to Y_{p}:K\text{ is compact and }\|K\|\leq M^{2}\}^{-SOT}$. Thus,
there is an SOT-open neighborhood of $I$ on $Y_{p}$ that is disjoint from $\{K:Y_{p}\to Y_{p}:K\text{ is compact and }\|K\|\leq M^{2}\}$. By definition of SOT, this means
that there exist a finite set $(y_{i}^{p})_{i=1}^{n(p)}$ of unit vectors in $Y_{p}$ and $\epsilon_{p}>0$ so that there is no compact operator $K$ on $Y_{p}$ for which
$\|K\|\leq M^{2}$ and $\|y_{i}^{p}-Ky_{i}^{p}\|<\epsilon_{p}$ for $1\leq i\leq n(p)$. Choose an uncountable subset $\mathscr{B}$ of $\mathscr{A}$ so that $n(p)$ is
constant (say $=n$) on $\mathscr{B}$ and $\displaystyle\inf_{p\in\mathscr{B}}\epsilon_{p}=\epsilon>0$.

Since $\mathscr{B}$ is uncountable and $X$ is separable, there exist $q<r$ in $\mathscr{B}$ so that $\|y_{i}^{q}-y_{i}^{r}\|<(M+M^{2})^{-1}\epsilon$ for
$1\leq i\leq n$. Let $K_{0}:Y_{r}\to Y_{r}$ be the restriction of $Q_{r}Q_{q}$ to $Y_{r}$. Then the following properties of $K_{0}$ are valid.
\begin{enumerate}[(i)]
\item $K_{0}$ is compact, since $Q_{q}|_{Y_{r}}:Y_{r}\rightarrow Y_{q}$ is compact by assumption (b);
\item $\|K_{0}\|\leq M^{2}$; and
\item $\|y_{i}^{r}-K_{0}y_{i}^{r}\|<\epsilon_{r}$ for $1\leq i\leq r$. Indeed,
\begin{eqnarray*}
\|y_{i}^{r}-K_{0}y_{i}^{r}\|&=&\|y_{i}^{r}-Q_{r}Q_{q}y_{i}^{r}\|\\&=&\|Q_{r}y_{i}^{r}-Q_{r}Q_{q}y_{i}^{r}\|\\&\leq&M\|(I-Q_{q})y_{i}^{r}\|\\&=&M\|(I-Q_{q})(y_{i
}^{r}-y_{i}^{q})\|\leq M(1+M)\|y_{i}^{r}-y_{i}^{q}\|<\epsilon\leq\epsilon_{r}.
\end{eqnarray*}
\end{enumerate}
These properties of $K_{0}$ contradict the choice of $(y_{i}^{r})_{i=1}^{n}$ and the proof is complete.
\end{proof}
The preceding lemma can be adapted to the context of operator theory via the Replacement Rule.
\begin{lemma}\label{25}
Suppose that $(S_{\alpha})_{\alpha\in\Lambda}$ is an uncountable indexed collection of non-quasidiagonal operators such that
\begin{enumerate}[(a)]
\item $S_{\alpha}$ is not quasidiagonal for each $\alpha\in\Lambda$ and
\item if $\beta\neq\gamma$ then every operator intertwining $S_{\beta}$ and $S_{\gamma}$ is compact.
\end{enumerate}
Then there is no operator $T\in\mathcal{B(H)}$ such that for every $\alpha\in\Lambda$, $S_{\alpha}$ is compalent to a reducing part of $T$.
\end{lemma}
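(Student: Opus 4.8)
The plan is to run the argument of Lemma \ref{24} through the Replacement Rule, with \emph{quasidiagonal} playing the role of \emph{BCAP}. Suppose, for contradiction, that such a $T$ exists. For each $\alpha\in\Lambda$ choose a reducing subspace $\mathcal{H}_{\alpha}$ of $T$ with $T|_{\mathcal{H}_{\alpha}}\stackrel{c}{\sim}S_{\alpha}$, and write $A_{\alpha}:=T|_{\mathcal{H}_{\alpha}}$ and $P_{\alpha}$ for the orthogonal projection of $\mathcal{H}$ onto $\mathcal{H}_{\alpha}$. Since quasidiagonality is invariant under unitary equivalence and compact perturbation, each $A_{\alpha}$ is \emph{not} quasidiagonal by (a). Note that, in contrast with Lemma \ref{24}, the $P_{\alpha}$ are orthogonal projections and hence automatically of norm one, so the step that selects a uniform bound $M$ is unnecessary here.

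First I would record a local witness of non-quasidiagonality: since $A_{\alpha}$ is not quasidiagonal, the local characterization of quasidiagonality yields a finite set of unit vectors $x^{\alpha}_{1},\ldots,x^{\alpha}_{n(\alpha)}\in\mathcal{H}_{\alpha}$ and an $\epsilon_{\alpha}>0$ such that no finite rank projection $P$ on $\mathcal{H}_{\alpha}$ satisfies simultaneously $\|(I-P)x^{\alpha}_{i}\|<\epsilon_{\alpha}$ for all $i$ and $\|PA_{\alpha}-A_{\alpha}P\|<\epsilon_{\alpha}$. As $\Lambda$ is uncountable, a pigeonhole argument produces an uncountable $\Lambda'\subseteq\Lambda$ on which $n(\alpha)$ is a constant $n$ and $\inf_{\alpha\in\Lambda'}\epsilon_{\alpha}=\epsilon>0$. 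Since $\mathcal{H}$ is separable, so is $\mathcal{H}^{n}$, and covering it by countably many balls of radius $\delta/2$ yields two distinct indices $\beta,\gamma\in\Lambda'$ with $\|x^{\beta}_{i}-x^{\gamma}_{i}\|<\delta$ for $1\leq i\leq n$, where $\delta>0$ is to be chosen small. This mirrors the passage to $\mathscr{B}$ and then to the close pair $q<r$.

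The analog of $K_{0}=Q_{r}Q_{q}|_{Y_{r}}$ is $D:=(P_{\gamma}P_{\beta}P_{\gamma})|_{\mathcal{H}_{\gamma}}\in\mathcal{B}(\mathcal{H}_{\gamma})$, and I claim it has three properties. (i) $D$ is compact: the cross operator $C:=P_{\beta}|_{\mathcal{H}_{\gamma}}\colon\mathcal{H}_{\gamma}\to\mathcal{H}_{\beta}$ intertwines $A_{\gamma}$ and $A_{\beta}$ because $P_{\beta}$ commutes with $T$, and writing the compalences as $A_{\gamma}=W_{\gamma}S_{\gamma}W_{\gamma}^{*}+K_{\gamma}$ and $A_{\beta}=W_{\beta}S_{\beta}W_{\beta}^{*}+K_{\beta}$ shows that $W_{\beta}^{*}CW_{\gamma}$ intertwines $S_{\gamma}$ and $S_{\beta}$ modulo $\mathcal{K(H)}$; then (b) forces $W_{\beta}^{*}CW_{\gamma}$, hence $C$ and hence $D=C^{*}C$, to be compact. (ii) $D$ commutes with $A_{\gamma}$: since $P_{\beta}$ and $P_{\gamma}$ both commute with $T$, so does $P_{\gamma}P_{\beta}P_{\gamma}$, and this passes to the restriction on the reducing subspace $\mathcal{H}_{\gamma}$. (iii) $D$ approximately fixes the test vectors: using $P_{\beta}x^{\beta}_{i}=x^{\beta}_{i}$, $P_{\gamma}x^{\gamma}_{i}=x^{\gamma}_{i}$ and $\|x^{\beta}_{i}-x^{\gamma}_{i}\|<\delta$ one obtains $\|Dx^{\gamma}_{i}-x^{\gamma}_{i}\|<2\delta$. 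Now $D$ is compact, positive and self-adjoint, so exactly as in the proof of Proposition \ref{13} its spectral projection $P:=E_{D}((\eta,\infty))$ for small $\eta>0$ is a finite rank projection that commutes with $A_{\gamma}$ and satisfies $\|(I-P)x^{\gamma}_{i}\|\leq\|x^{\gamma}_{i}-Dx^{\gamma}_{i}\|+\eta<2\delta+\eta$. Choosing $\delta,\eta$ with $2\delta+\eta<\epsilon\leq\epsilon_{\gamma}$ produces a finite rank projection $P$ on $\mathcal{H}_{\gamma}$ with $\|(I-P)x^{\gamma}_{i}\|<\epsilon_{\gamma}$ and $\|PA_{\gamma}-A_{\gamma}P\|=0<\epsilon_{\gamma}$, contradicting the witness for $\gamma$.

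The step I expect to be the main obstacle is (i), the compactness of the cross operator $C=P_{\beta}|_{\mathcal{H}_{\gamma}}$. The subtlety is that, after transport through the compalences, $C$ only intertwines $S_{\gamma}$ and $S_{\beta}$ \emph{modulo} $\mathcal{K(H)}$, so hypothesis (b) has to be used in a form stable under compact perturbation; this is precisely what is available when (b) arises, as in Theorem \ref{22}, from the disjointness of the essential spectra (where Lemma \ref{12} applies at the level of the Calkin algebra and even approximate intertwiners are forced to be compact). Once this compactness is secured, the remaining ingredients — the local characterization of non-quasidiagonality and the spectral-projection passage from a compact operator commuting with $A_{\gamma}$ to a finite rank projection commuting with $A_{\gamma}$ — are routine, the latter being a localized version of Proposition \ref{13}.
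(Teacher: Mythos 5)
Your proposal is correct and follows the same route as the paper's proof: the same transport of Lemma \ref{24} through the Replacement Rule, the same pigeonhole and separability steps, and the same key operator --- your $D=(P_{\gamma}P_{\beta}P_{\gamma})|_{\mathcal{H}_{\gamma}}$ is literally the paper's $K_{0}=(P_{\gamma}P_{\beta})|_{\mathcal{H}_{\gamma}}$. The one real difference is the non-quasidiagonality witness. The paper never invokes Halmos's local characterization of quasidiagonality (which is not proved there; quasidiagonal is \emph{defined} as block diagonal plus compact). Instead it applies Proposition \ref{13}: since $T_{\alpha}$ is not block diagonal, $I\notin\{K\in\mathcal{K}(\mathcal{H}_{\alpha}):KT_{\alpha}=T_{\alpha}K,\;K^{*}=K\}^{-SOT}$, so the witness reads: no \emph{compact self-adjoint} operator commuting with $T_{\alpha}$ moves every $x_{i}^{\alpha}$ by less than $\epsilon_{\alpha}$. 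With that witness, $K_{0}$ itself --- compact, self-adjoint, commuting with $T_{\gamma}$, with $\|x_{i}^{\gamma}-K_{0}x_{i}^{\gamma}\|<\epsilon\leq\epsilon_{\gamma}$ --- is already the contradiction. Your extra endgame (extracting the finite-rank spectral projection $E_{D}((\eta,\infty))$, which commutes with $A_{\gamma}$ by the eigenspace argument) is correct, but it amounts to re-proving a piece of Proposition \ref{13} locally, and it is precisely what forces you to import Halmos's equivalence theorem from outside the paper. The paper's choice of witness buys self-containedness and a shorter finish; your detour buys nothing extra.

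Concerning the obstacle you flag --- that hypothesis (b) covers only exact intertwiners of $S_{\gamma}$ and $S_{\beta}$, while transport through the compalences yields only that $W_{\beta}^{*}CW_{\gamma}$ intertwines them modulo $\mathcal{K(H)}$ --- your diagnosis is accurate, but be aware that the paper's proof does not resolve it either: it applies (b) directly to the exact intertwiner $P_{\beta}|_{\mathcal{H}_{\gamma}}$ of $T_{\gamma}$ and $T_{\beta}$ (``compact by assumption''), which tacitly presumes that the property ``all intertwiners are compact'' passes from the pair $(S_{\gamma},S_{\beta})$ to the compalent pair $(T_{\gamma},T_{\beta})$; justifying that transfer runs into exactly the modulo-compact issue you describe. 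Both proofs become airtight if (b) is read in its essential form (every $A$ with $AS_{\beta}-S_{\gamma}A\in\mathcal{K(H)}$ is compact), which \emph{is} invariant under compalence; and in the lemma's only application, Theorem \ref{22}, this stronger form is exactly what Lemma \ref{12} supplies, since Rosenblum's argument works in the Calkin algebra and compalence preserves essential spectra. So this point is a wrinkle in the lemma as stated, shared by the paper's own proof, not a defect of your argument relative to it.
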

\begin{proof}
Suppose, on the contrary, that there is an operator $T\in\mathcal{B(H)}$ such that for every $\alpha\in\Lambda$, $S_{\alpha}$ is compalent to a reducing part $T_{\alpha}:=
T|_{\mathcal{H}_{\alpha}}$ of $T$ where $\mathcal{H}_{\alpha}$ is a reducing subspace for $T$. Let $P_{\alpha}$ be the orthogonal projection from $\mathcal{H}$ onto
$\mathcal{H}_{\alpha}$.

For each $\alpha\in\Lambda$, since $S_{\alpha}$ is not quasidiagonal, $T_{\alpha}$ is not block diagonal so by Proposition \ref{13}, $I\notin\{K\in\mathcal{K(H)}:KT_{
\alpha}=T_{\alpha}K\text{ and }K^{*}=K\}^{-SOT}$. Thus, there is an SOT-open neighborhood of $I\in\mathcal{B(H)}$ that is disjoint from $\{K\in\mathcal{K(H)}:KT_{\alpha}=
T_{\alpha}K\text{ and }K^{*}=K\}$. By definition of SOT, this means that there exist a finite set $(x_{i}^{\alpha})_{i=1}^{n(\alpha)}$ of unit vectors in $\mathcal{H}_{
\alpha}$ and $\epsilon_{\alpha}>0$ so that there is no self-adjoint compact operator $K$ on $\mathcal{H}_{\alpha}$ commuting with $T_{\alpha}$ for which
$\|x_{i}^{\alpha}-Kx_{i}^{\alpha}\|<\epsilon_{\alpha}$ for $1\leq i\leq n(\alpha)$. Choose an uncountable subset $\mathscr{B}$ of $\Lambda$ so that $n(\alpha)$ is constant
(say $=n$) on $\mathscr{B}$ and $\displaystyle\inf_{\alpha\in \mathscr{B}}\epsilon_{\alpha}=\epsilon>0$.

Since $\mathscr{B}$ is uncountable and $\mathcal{H}$ is separable, there exist $\beta\neq\gamma$ in $\mathscr{B}$ so that $\|x_{i}^{\beta}-x_{i}^{\gamma}\|<\epsilon$ for
$1\leq i\leq n$. Let $K_{0}\in\mathcal{B}(\mathcal{H}_{\gamma})$ be the restriction of $P_{\gamma}P_{\beta}$ to $\mathcal{H}_{\gamma}$. Then the following properties of
$K_{0}$ are valid.
\begin{enumerate}[(i)]
\item $K_{0}$ is self-adjoint.
\item $K_{0}$ is compact. Indeed, $P_{\beta}|_{\mathcal{H}_{\gamma}}$ intertwines $T_{\gamma}$ and $T_{\beta}$ and thus is compact by assumption.
\item $K_{0}$ commutes with $T_{\gamma}$. Indeed, since $\mathcal{H}_{\beta}$ and $\mathcal{H}_{\gamma}$ are reducing subspaces for $T$, $P_{\beta}$ and $P_{\gamma}$
commute with $T$. Thus, $P_{\gamma}P_{\beta}T=TP_{\gamma}P_{\beta}$ and so $P_{\gamma}P_{\beta}|_{\mathcal{H}_{\gamma}}T|_{\mathcal{H}_{\gamma}}=T|_{\mathcal{H}_{\gamma}}
P_{\gamma}P_{\beta}|_{\mathcal{H}_{\gamma}}$. Hence, $K_{0}T_{\gamma}=T_{\gamma}K_{0}$.
\item $\|x_{i}^{\gamma}-K_{0}x_{i}^{\gamma}\|<\epsilon_{\gamma}$ for $1\leq i\leq n$. Indeed,
\begin{eqnarray*}
\|x_{i}^{\gamma}-K_{0}x_{i}^{\gamma}\|&=&\|x_{i}^{\gamma}-P_{\gamma}P_{\beta}x_{i}^{\gamma}\|\\&=&\|P_{\gamma}x_{i}^{\gamma}-P_{\gamma}P_{\beta}x_{i}^{\gamma}\|\\&\leq&
\|(I-P_{\beta})x_{i}^{\gamma}\|=\|(I-P_{\beta})(x_{i}^{\gamma}-x_{i}^{\beta})\|\leq\|x_{i}^{\gamma}-x_{i}^{\beta}\|<\epsilon\leq\epsilon_{\gamma}.
\end{eqnarray*}
\end{enumerate}
These properties of $K_{0}$ contradict the choice of $(x_{i}^{\gamma})_{i=1}^{n}$ and the proof is complete.
\end{proof}
The replacements needed to transform Lemma \ref{24} to Lemma \ref{25} are
\begin{center}
\begin{tabular}{ll}
Lemma \ref{24}&Lemma \ref{25}\\\\
$E_{p}$&$S_{\alpha}$\\\\
$X$&$T$\\\\
$Y_{p}$&$T_{\alpha}$\\\\
$Q_{p}$&$P_{\alpha}$\\\\
$(y_{i}^{p})_{i=1}^{n(p)}$&$(x_{i}^{\alpha})_{i=1}^{n(\alpha)}$\\\\
$K:Y_{p}\to Y_{p}$ compact&$K\in\mathcal{K(H)}$ commuting with $T_{\alpha}$,\\&i.e, $K\in\mathcal{K(H)}$ intertwining $T_{\alpha}$ and $T_{\alpha}$\\\\
$\mathscr{B}$&$\mathscr{B}$
\end{tabular}
\end{center}
Also, $E_{p}$ failing the BCAP is replaced by $S_{\alpha}$ not being quasidiagonal. This is because $E_{p}$ failing the BCAP means that for every $M>0$,
\[I\notin\{K:E_{p}\to E_{p}:K\text{ is compact and }\|K\|\leq M\}^{-SOT},\]
which, according to the Replacement Rule, could be replaced by
\[I\notin\{K\in\mathcal{K(H)}:KS_{\alpha}=S_{\alpha}K\text{ and }\|K\|\leq M\}^{-SOT}.\]
But in the context of operator theory, it is natural to add the condition that $K^{*}=K$ so $E_{p}$ failing the BCAP could be replaced by
\[I\notin\{K\in\mathcal{K(H)}:KS_{\alpha}=S_{\alpha}K,\;\|K\|\leq M\text{ and }K^{*}=K\}^{-SOT},\]
which is equivalent to $S_{\alpha}$ not being block diagonal by Proposition \ref{13}. But BCAP preserves isomorphism whereas block diagonality does not preserve
compalence. So $E_{p}$ failing the BCAP should be replaced by $S_{\alpha}$ not being quasidiagonal.
\begin{proof}[Proof of Theorem \ref{22}]
That (ii)$\Rightarrow$(i) follows easily from Theorem \ref{21}. To prove Not (ii)$\Rightarrow$Not (i), suppose that (ii) is not true, i.e., there are uncountably many
non-quasidiagonal operators in $\mathcal{C}$. By assumption and Lemma \ref{12}, every operator intertwining two different operators in $\mathcal{C}$ is compact. Thus, by
Lemma \ref{25}, (i) is not true.
\end{proof}
We conclude this section with two corollaries of Theorem \ref{22}. The first one is a direct consequence of Theorem \ref{22} while the second one easily follows from the
first one since a Fredholm operator that is quasidiagonal must have index 0.
\begin{corollary}\label{26}
Let $T_{0}\in \mathcal{B(H)}$ with $\sigma_{e}(T_{0})\subset\{z\in\mathbb{C}:|z|=1\}$. Then $T_{0}$ is quasidiagonal if and only if there is an operator $T\in \mathcal{B(
H)}$ such that for every $\alpha\in [0,1]$, $\alpha T_{0}$ is compalent to a reducing part of $T$.
\end{corollary}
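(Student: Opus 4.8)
The plan is to apply Theorem \ref{22} to the uncountable family $\mathcal{C}=\{\alpha T_{0}:\alpha\in[0,1]\}$. First I would verify that $\mathcal{C}$ meets the hypotheses of Theorem \ref{22}. Uniform boundedness is immediate, since $\|\alpha T_{0}\|\leq\|T_{0}\|$ for all $\alpha\in[0,1]$. For the disjointness of essential spectra, I would use that $\pi$ is a unital homomorphism, so $\pi(\alpha T_{0})=\alpha\pi(T_{0})$ and hence $\sigma_{e}(\alpha T_{0})=\alpha\,\sigma_{e}(T_{0})$. Since $\sigma_{e}(T_{0})\subset\{z:|z|=1\}$ by hypothesis, $\sigma_{e}(\alpha T_{0})$ is contained in the circle $\{z:|z|=\alpha\}$ for each $\alpha>0$, while $\sigma_{e}(0\cdot T_{0})=\{0\}$. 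These concentric circles, together with the singleton $\{0\}$, are pairwise disjoint for distinct values of $\alpha$, so $\sigma_{e}(S_{1})\cap\sigma_{e}(S_{2})=\emptyset$ whenever $S_{1},S_{2}\in\mathcal{C}$ are distinct. I would also record that $T_{0}\neq 0$, since otherwise $\sigma_{e}(T_{0})=\{0\}$ would fail to lie on the unit circle; hence $\alpha\mapsto\alpha T_{0}$ is injective and $\mathcal{C}$ is genuinely indexed by the uncountable set $[0,1]$.

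Having checked the hypotheses, I would invoke Theorem \ref{22}: the existence of an operator $T\in\mathcal{B(H)}$ such that every $\alpha T_{0}$ is compalent to a reducing part of $T$ (condition (i)) is equivalent to the statement that every $\alpha T_{0}$ outside a countable subset of $[0,1]$ is quasidiagonal (condition (ii)). It then remains only to show that condition (ii) holds if and only if $T_{0}$ itself is quasidiagonal, which will chain with the equivalence (i)$\Leftrightarrow$(ii) to give the corollary.

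The key observation for this last step is that quasidiagonality is invariant under multiplication by a nonzero scalar: if $S=B+K$ with $B$ block diagonal and $K$ compact, then for any scalar $c\neq 0$ we have $cS=cB+cK$, where $cB$ is block diagonal and $cK$ is compact. Consequently, if $T_{0}$ is quasidiagonal then $\alpha T_{0}$ is quasidiagonal for every $\alpha$, so condition (ii) holds trivially. Conversely, if condition (ii) holds, then since $[0,1]$ with a countable set removed is still uncountable, there exists some $\alpha>0$ for which $\alpha T_{0}$ is quasidiagonal; applying the scalar $\alpha^{-1}$ shows that $T_{0}=\alpha^{-1}(\alpha T_{0})$ is quasidiagonal. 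Thus condition (ii) is equivalent to the quasidiagonality of $T_{0}$, completing the argument.

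Since this is a direct application of Theorem \ref{22}, I do not expect a serious obstacle; the only substantive point is the essential-spectrum scaling $\sigma_{e}(\alpha T_{0})=\alpha\,\sigma_{e}(T_{0})$, which forces the sets $\sigma_{e}(\alpha T_{0})$ onto disjoint concentric circles and thereby makes the disjointness hypothesis of Theorem \ref{22} available. Everything else is the elementary scale-invariance of quasidiagonality together with the routine check of uniform boundedness.
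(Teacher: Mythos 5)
Your proposal is correct and is precisely the argument the paper intends when it calls Corollary \ref{26} ``a direct consequence of Theorem \ref{22}'': apply that theorem to $\mathcal{C}=\{\alpha T_{0}:\alpha\in[0,1]\}$, whose essential spectra lie on disjoint circles of radius $\alpha$, and use the scale-invariance of quasidiagonality to identify condition (ii) with the quasidiagonality of $T_{0}$. Your verification of the hypotheses (uniform boundedness, injectivity of $\alpha\mapsto\alpha T_{0}$, and the disjointness via $\sigma_{e}(\alpha T_{0})=\alpha\,\sigma_{e}(T_{0})$) supplies exactly the details the paper leaves implicit.
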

\begin{corollary}\label{27}
Let $U$ be the unilateral shift. There is no operator $T\in \mathcal{B(H)}$ such that for every $\alpha\in [0,1]$, $\alpha U$ is compalent to a reducing part of $T$. In
particular, there is no operator $T\in \mathcal{B(H)}$ such that every contraction in $\mathcal{B(H)}$ is compalent to a reducing part of $T$.
\end{corollary}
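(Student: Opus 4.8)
The plan is to apply Theorem \ref{22} to the class $\mathcal{C}=\{\alpha U:\alpha\in[0,1]\}$ and argue by contradiction. First I would record the two easy structural facts: $\mathcal{C}$ is uniformly bounded, since $\|\alpha U\|=\alpha\leq 1$ for every $\alpha\in[0,1]$, and $\mathcal{C}$ is uncountable. To invoke Theorem \ref{22} I must also check the disjoint essential spectra hypothesis. Since $\sigma_{e}(U)=\{z:|z|=1\}$, scaling gives $\sigma_{e}(\alpha U)=\{z:|z|=\alpha\}$ for $\alpha\in(0,1]$ and $\sigma_{e}(0)=\{0\}$. Hence for distinct $\alpha,\beta\in[0,1]$ the essential spectra are disjoint (concentric circles of different radii, or such a circle and the origin), so $\sigma_{e}(\alpha U)\cap\sigma_{e}(\beta U)=\emptyset$. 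Thus $\mathcal{C}$ satisfies the hypotheses of Theorem \ref{22}.

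Now suppose, for contradiction, that an operator $T$ as in the statement exists; this is exactly condition (i) of Theorem \ref{22} for $\mathcal{C}$. By that theorem, condition (ii) must then hold, i.e., all but countably many of the operators $\alpha U$ are quasidiagonal. I would derive a contradiction by showing that $\alpha U$ is \emph{not} quasidiagonal for every $\alpha\in(0,1]$, which is uncountably many operators. For such $\alpha$ we have $0\notin\sigma_{e}(\alpha U)$, so $\alpha U$ is Fredholm; its index is $\dim\ker(\alpha U)-\dim\ker(\alpha U)^{*}=0-1=-1\neq 0$, the same as the index of $U$. Since a quasidiagonal Fredholm operator has index $0$, it follows that $\alpha U$ cannot be quasidiagonal. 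This contradicts (ii), so no such $T$ exists, which proves the first assertion.

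The ``in particular'' clause then follows immediately: each $\alpha U$ with $\alpha\in[0,1]$ is a contraction, so an operator $T$ to which every contraction is compalent to a reducing part would in particular accommodate the whole family $\{\alpha U:\alpha\in[0,1]\}$, contradicting the first assertion.

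I do not expect a substantial obstacle here, since Theorem \ref{22} does essentially all of the work; the only steps requiring care are the computation $\sigma_{e}(\alpha U)=\{z:|z|=\alpha\}$, which is needed both for Fredholmness and for the disjointness hypothesis, and the index fact that a quasidiagonal Fredholm operator has index $0$. The latter is the crux of the argument and is precisely the reason the \emph{shift}, rather than, say, a normal contraction, produces the obstruction: its nonzero Fredholm index obstructs quasidiagonality for every nonzero scaling.
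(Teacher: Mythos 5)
Your proof is correct and takes essentially the same route as the paper: Theorem \ref{22} together with the fact that a quasidiagonal Fredholm operator must have index $0$ (here $\mathrm{ind}(\alpha U)=-1$ for $\alpha\in(0,1]$). The paper merely packages the application of Theorem \ref{22} through Corollary \ref{26} with $T_{0}=U$, whereas you apply the theorem directly to the family $\{\alpha U:\alpha\in[0,1]\}$; the content is identical.
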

\section{Ultraproducts of operators}\label{3}
We begin by recalling from \cite{Reid} a slight reformulation of the construction of the Calkin representation in the language of ultraproducts.

Let $\mathscr{U}$ be a free ultrafilter on $\NN\,$. If $(a_{n})_{n\geq 1}$ is a bounded sequence in $\mathbb{C}$, then its ultralimit through $\mathscr{U}$ is denoted by
$\displaystyle\lim_{n,\mathscr{U}}a_{n}$. Consider the Banach space
\[\mathcal{H}^{\mathscr{U}}:=l_{\infty}(\mathcal{H})/\left\{(x_{n})_{n\geq 1}\in l_{\infty}(\mathcal{H}):\lim_{n,\mathscr{U}}\|x_{n}\|=0\right\}.\]
If $(x_{n})_{n\geq 1}\in l_{\infty}(\mathcal{H})$ then its image in $\mathcal{H}^{\mathscr{U}}$ is denoted by $(x_{n})_{\mathscr{U}}$, and it can be easily checked that
\[\|(x_{n})_{\mathscr{U}}\|=\lim_{n,\mathscr{U}}\|x_{n}\|.\]
Moreover, $\mathcal{H}^{\mathscr{U}}$ is, in fact, a Hilbert space with inner product
\[\langle(x_{n})_{\mathscr{U}},(y_{n})_{\mathscr{U}}\rangle=\lim_{n,\mathscr{U}}\langle x_{n},y_{n}\rangle.\]
But $\mathcal{H}^{\mathscr{U}}$ is nonseparable (see, e.g., \cite[Proposition 8.5]{Diestel}).

If $(T_{n})_{n\geq 1}$ is a bounded sequence in $\mathcal{B(H)}$, then its {\it ultraproduct} $(T_{1},T_{2},\ldots)_{\mathscr{U}}\in\mathcal{B}(\mathcal{H}^{\mathscr{U}})$
is defined by $(x_{n})_{\mathscr{U}}\mapsto (T_{n}x_{n})_{\mathscr{U}}$. If $T\in\mathcal{B(H)}$ then its {\it ultrapower} $T^{\mathscr{U}}\in\mathcal{B}(\mathcal{H}^{
\mathscr{U}})$ is defined by $(x_{n})_{\mathscr{U}}\mapsto (Tx_{n})_{\mathscr{U}}$. It is easy to see that
\[(T_{1},T_{2},\ldots)_{\mathscr{U}}^{*}=(T_{1}^{*},T_{2}^{*},\ldots)_{\mathscr{U}},\]
and in particular, $(T^{\mathscr{U}})^{*}=(T^{*})^{\mathscr{U}}$.

We pause here for a while to show that the strong limit of a sequence of normal operators on $\mathcal{H}$ is subnormal, using the ultraproduct construction. A stronger
result was proved in \cite[Theorem 3.3]{Bishop} and also in \cite{Conway} where the strong limit of a net of normal operators on $\mathcal{H}$ was shown to be subnormal.
Suppose that $(T_{n})_{n\geq 1}$ is a sequence of normal operators on $\mathcal{H}$ converging in SOT to $T\in\mathcal{B(H)}$. The uniform boundedness principle gives
$\displaystyle\sup_{n\geq 1}\|T_{n}\|<\infty$. Hence, the ultraproduct $(T_{1},T_{2},\ldots)_{\mathscr{U}}$ is well defined and is normal. Moreover, $\{(x)_{\mathscr{U}}:x
\in\mathcal{H}\}$ is invariant under this operator, and $T\cong(T_{1},T_{2},\ldots)_{\mathscr{U}}|_{\{(x)_{\mathscr{U}}:x\in\mathcal{H}\}}$. Therefore, $T$ is subnormal.

Consider the subspace
\[\widehat{\mathcal{H}}:=\left\{(x_{n})_{\mathscr{U}}\in \mathcal{H}^{\mathscr{U}}:w\hyphen\lim_{n,\mathscr{U}}x_{n}=0\right\}.\]
Here $\displaystyle w\hyphen\lim_{n,\mathscr{U}}x_{n}$ is the weak limit of $(x_{n})_{n\geq 1}$ through $\mathscr{U}$, i.e., the unique element $x\in\mathcal{H}$ such that
\[\langle x,y\rangle=\lim_{n,\mathscr{U}}\langle x_{n},y\rangle,\quad y\in\mathcal{H}.\]
Note that $\{(x)_{\mathscr{U}}:x\in\mathcal{H}\}^{\perp}=\widehat{\mathcal{H}}$, and thus,
\[\widehat{\mathcal{H}}^{\perp}=\{(x)_{\mathscr{U}}:x\in\mathcal{H}\}.\]
The orthogonal projection from $\mathcal{H}^{\mathscr{U}}$ onto $\widehat{\mathcal{H}}^{\perp}$ is given by $\displaystyle (x_{n})_{\mathscr{U}}\mapsto(w\hyphen\lim_{k,
\mathscr{U}}x_{k})_{\mathscr{U}}$. We shall identify the space $\widehat{\mathcal{H}}^{\perp}$ with $\mathcal{H}$ in the natural way. So we have $\mathcal{H}^{\mathscr{U}}=\mathcal{H}\oplus\mathcal{\widehat{H}}$.

For $T\in\mathcal{B(H)}$, $\mathcal{\widehat{H}}$ is a reducing subspace for $T^{\mathscr{U}}$ and define $\widehat{T}\in\mathcal{B}(\mathcal{\widehat{H}})$ by
\[\widehat{T}:=T^{\mathscr{U}}|_{\mathcal{\widehat{H}}}.\]
Thus, we have
\[T^{\mathscr{U}}=T\oplus\widehat{T}\]
with respect to the decomposition $\mathcal{H}^{\mathscr{U}}=\mathcal{H}\oplus\widehat{\mathcal{H}}$.

Note that $\widehat{K}=0$ for $K\in\mathcal{K(H)}$. (The proof uses the fact that every sequence in a compact metric space converges to an element through $\mathscr{U}$.) The map $f:\mathcal{B(H)}/\mathcal{K(H)}\to\mathcal{B}(\widehat{\mathcal{H}})$ defined by $\pi(T)\mapsto\widehat{T}$
is called the {\it Calkin representation}.
\begin{theorem}[\cite{Calkin}, Theorem 5.5]\label{31}
The map $f$ is an isometric $*$-isomorphism into $\mathcal{B}(\mathcal{\widehat{H}})$.
\end{theorem}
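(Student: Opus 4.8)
The plan is to verify, in order, that $f$ is a well-defined $*$-homomorphism and then that it is isometric; injectivity (hence that it is an isomorphism \emph{into} $\mathcal{B}(\widehat{\mathcal{H}})$) will come for free once isometry is established, since an isometric homomorphism is automatically injective.

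First I would check the algebraic part. The map $T\mapsto T^{\mathscr{U}}$ is a unital $*$-homomorphism from $\mathcal{B(H)}$ into $\mathcal{B}(\mathcal{H}^{\mathscr{U}})$: linearity is immediate, multiplicativity follows from $(ST)^{\mathscr{U}}(x_{n})_{\mathscr{U}}=(STx_{n})_{\mathscr{U}}=S^{\mathscr{U}}T^{\mathscr{U}}(x_{n})_{\mathscr{U}}$, and the $*$-property was already recorded above as $(T^{\mathscr{U}})^{*}=(T^{*})^{\mathscr{U}}$. Since $\widehat{\mathcal{H}}$ is a reducing subspace for every $T^{\mathscr{U}}$, restriction to $\widehat{\mathcal{H}}$ is again a $*$-homomorphism, so $T\mapsto\widehat{T}=T^{\mathscr{U}}|_{\widehat{\mathcal{H}}}$ is a $*$-homomorphism. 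Because $\widehat{K}=0$ for every $K\in\mathcal{K(H)}$, this map annihilates $\mathcal{K(H)}$ and therefore descends to a well-defined $*$-homomorphism $f$ on $\mathcal{B(H)}/\mathcal{K(H)}$ with $f(\pi(T))=\widehat{T}$.

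The remaining content is the isometry $\|\widehat{T}\|=\|\pi(T)\|=\|T\|_{e}$. The inequality $\|\widehat{T}\|\leq\|T\|_{e}$ is the easy half: for any $K\in\mathcal{K(H)}$ one has $\widehat{T}=\widehat{T-K}$ by linearity and $\widehat{K}=0$, and since $\|S^{\mathscr{U}}\|=\|S\|$ for every $S$ (the ultralimit computation gives ``$\leq$'', and testing on $(x)_{\mathscr{U}}$ for a near-norming unit vector $x$ gives ``$\geq$''), restriction to $\widehat{\mathcal{H}}$ yields $\|\widehat{T}\|=\|\widehat{T-K}\|\leq\|(T-K)^{\mathscr{U}}\|=\|T-K\|$; taking the infimum over $K$ gives the claim. (Equivalently, this is automatic, as any $*$-homomorphism of $C^{*}$-algebras is contractive.)

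The reverse inequality $\|\widehat{T}\|\geq\|T\|_{e}$ is the step I expect to be the main obstacle, and I would handle it by producing a unit vector of $\widehat{\mathcal{H}}$ on which $\widehat{T}$ nearly attains the essential norm. Write $d=\|T\|_{e}$. By the $C^{*}$-identity in the Calkin algebra, $d^{2}=\|\pi(T^{*}T)\|$, and since $\pi(T^{*}T)$ is positive its norm equals its spectral radius, so $d^{2}=\max\sigma_{e}(T^{*}T)$; in particular $d^{2}\in\sigma_{e}(T^{*}T)$. As $T^{*}T$ is self-adjoint, a Weyl sequence supplies an orthonormal sequence $(x_{n})$ in $\mathcal{H}$ with $\|(T^{*}T-d^{2})x_{n}\|\to 0$. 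An orthonormal sequence is weakly null, so $w\hyphen\lim_{n,\mathscr{U}}x_{n}=0$ and $(x_{n})_{\mathscr{U}}\in\widehat{\mathcal{H}}$ is a unit vector; then
\[
\|\widehat{T}(x_{n})_{\mathscr{U}}\|^{2}=\lim_{n,\mathscr{U}}\|Tx_{n}\|^{2}=\lim_{n,\mathscr{U}}\langle T^{*}Tx_{n},x_{n}\rangle=d^{2},
\]
giving $\|\widehat{T}\|\geq d$. Combining the two inequalities shows that $f$ is isometric, hence injective, i.e.\ a $*$-isomorphism into $\mathcal{B}(\widehat{\mathcal{H}})$. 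An alternative to this last direction that sidesteps spectral theory is to prove injectivity of $f$ directly---if $T$ is not compact there is a weakly null sequence of unit vectors $(x_{n})$ with $\inf_{n}\|Tx_{n}\|>0$, so $(x_{n})_{\mathscr{U}}\in\widehat{\mathcal{H}}$ satisfies $\widehat{T}(x_{n})_{\mathscr{U}}\neq 0$---and then invoke that an injective $*$-homomorphism of $C^{*}$-algebras is automatically isometric.
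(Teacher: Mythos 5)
The paper does not actually prove this statement: it is quoted as a known result with a citation to Calkin's original paper (Theorem 5.5 there), so there is no internal argument of the paper to compare yours against. Judged on its own, your proof is correct and complete, and it is the natural modern argument. The algebraic part (that $T\mapsto T^{\mathscr{U}}$ is a unital $*$-homomorphism, that restricting to the reducing subspace $\widehat{\mathcal{H}}$ preserves this, and that the map kills $\mathcal{K(H)}$ and hence descends to the Calkin algebra) uses only facts recorded in the paper just before the theorem. The easy inequality $\|\widehat{T}\|\le\|T\|_{e}$ via $\widehat{T}=\widehat{T-K}$ and $\|S^{\mathscr{U}}\|=\|S\|$ is right, and the reverse inequality is the genuine content: your use of the $C^{*}$-identity to get $\|T\|_{e}^{2}=\max\sigma_{e}(T^{*}T)$, followed by a Weyl (singular) sequence for the self-adjoint operator $T^{*}T$ at the point $d^{2}$, produces exactly the kind of weakly null unit vector $(x_{n})_{\mathscr{U}}\in\widehat{\mathcal{H}}$ on which $\widehat{T}$ nearly attains the essential norm. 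One small remark: you do not need the Weyl sequence to be orthonormal---weak nullity of the unit vectors is all that enters your computation---although the orthonormal form of Weyl's criterion for the essential spectrum of a self-adjoint operator is also standard. Your alternative ending (prove injectivity directly from the characterization of compact operators as those sending weakly null sequences to norm-null sequences, then invoke that an injective $*$-homomorphism of $C^{*}$-algebras is isometric) is likewise valid, and is arguably the slicker route since it outsources the quantitative step to general $C^{*}$-theory.
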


The following lemma will be useful throughout this section.
\begin{lemma}\label{32}
Let $T_{1},T_{3}\in \mathcal{B(H)}$ and let $T_{2},T_{4}$ be operators on a (not necessarily separable) Hilbert space $\widetilde{\mathcal{H}}$. If $T_{1}\oplus T_{2}\cong
T_{3}\oplus T_{4}$ then there is a separable reducing subspace $\mathcal{M}$ for both $T_{2}$ and $T_{4}$ such that
\[T_{1}\oplus(T_{2}|_{\mathcal{M}})\cong T_{3}\oplus(T_{4}|_{\mathcal{M}}).\]
\end{lemma}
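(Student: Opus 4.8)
The plan is to realize the desired unitary equivalence as the restriction of the given one to a reducing subspace of the special form $\mathcal{H}\oplus\mathcal{M}$, and then to build $\mathcal{M}$ by a separable closing-off argument. First I would fix a unitary $W$ on $\mathcal{H}\oplus\widetilde{\mathcal{H}}$ implementing $T_{1}\oplus T_{2}\cong T_{3}\oplus T_{4}$, so that $W(T_{1}\oplus T_{2})=(T_{3}\oplus T_{4})W$, and write it in block form
\[
W=\begin{pmatrix} A & B \\ C & D \end{pmatrix}
\]
relative to the decomposition $\mathcal{H}\oplus\widetilde{\mathcal{H}}$, where $A\in\mathcal{B}(\mathcal{H})$, $B\in\mathcal{B}(\widetilde{\mathcal{H}},\mathcal{H})$, $C\in\mathcal{B}(\mathcal{H},\widetilde{\mathcal{H}})$, $D\in\mathcal{B}(\widetilde{\mathcal{H}})$, so that $W^{*}$ has block entries $A^{*},C^{*},B^{*},D^{*}$.

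The key observation is that, for a subspace $\mathcal{M}\subseteq\widetilde{\mathcal{H}}$, the subspace $\mathcal{H}\oplus\mathcal{M}$ reduces $W$ precisely when $C\mathcal{H}\subseteq\mathcal{M}$, $B^{*}\mathcal{H}\subseteq\mathcal{M}$, and $\mathcal{M}$ is invariant under both $D$ and $D^{*}$; indeed $W(h,m)=(Ah+Bm,\;Ch+Dm)$ and $W^{*}(h,m)=(A^{*}h+C^{*}m,\;B^{*}h+D^{*}m)$, and the only nontrivial containment is that the $\widetilde{\mathcal{H}}$-component land in $\mathcal{M}$. If in addition $\mathcal{M}$ reduces both $T_{2}$ and $T_{4}$, then $\mathcal{H}\oplus\mathcal{M}$ is invariant under $T_{1}\oplus T_{2}$ and under $T_{3}\oplus T_{4}$, and $V:=W|_{\mathcal{H}\oplus\mathcal{M}}$ is a unitary of $\mathcal{H}\oplus\mathcal{M}$ onto itself (a unitary restricted to a reducing subspace is again unitary) that intertwines the restrictions, yielding exactly $T_{1}\oplus(T_{2}|_{\mathcal{M}})\cong T_{3}\oplus(T_{4}|_{\mathcal{M}})$.

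It remains to produce such an $\mathcal{M}$ that is separable. Since $\mathcal{H}$ is separable, $C\mathcal{H}$ and $B^{*}\mathcal{H}$ are separable subsets of $\widetilde{\mathcal{H}}$; let $\mathcal{M}_{0}$ be the (separable) closed linear span of their union. I would then take $\mathcal{M}$ to be the smallest closed subspace containing $\mathcal{M}_{0}$ and invariant under the six operators $T_{2},T_{2}^{*},T_{4},T_{4}^{*},D,D^{*}$, obtained concretely by applying all finite words in these operators to a fixed countable dense subset of $\mathcal{M}_{0}$ and taking the closed span. Since we close under only countably many bounded operators, this $\mathcal{M}$ remains separable, and by construction it reduces $T_{2}$ and $T_{4}$, is invariant under $D$ and $D^{*}$, and contains $C\mathcal{H}$ and $B^{*}\mathcal{H}$; this is precisely the list of conditions isolated above, so $V=W|_{\mathcal{H}\oplus\mathcal{M}}$ completes the argument.

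The proof is essentially a separable-reduction (L\"owenheim--Skolem type) device, and there is no deep obstacle. The one point requiring care is to verify that the conditions forcing $\mathcal{H}\oplus\mathcal{M}$ to reduce $W$ involve only the separable data $C\mathcal{H}$ and $B^{*}\mathcal{H}$ together with closure under $D$ and $D^{*}$, so that separability of $\mathcal{M}$ is genuinely attainable, and to confirm that the resulting $V$ is an honest unitary between $\mathcal{H}\oplus\mathcal{M}$ and itself rather than a mere intertwiner.
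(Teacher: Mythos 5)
Your proof is correct and is essentially the paper's argument: both fix the implementing unitary $W$ and restrict it to a separable reducing subspace of the form $\mathcal{H}\oplus\mathcal{M}$ produced by a countable closing-off procedure. The paper merely packages the closing-off differently --- it takes $\mathcal{N}$ to be the closed span of $C^{*}(T_{1}\oplus T_{2},T_{3}\oplus T_{4},W)$ applied to $\mathcal{H}\oplus\{0\}$, which is automatically separable, reducing for all three operators, and of the form $\mathcal{H}\oplus\mathcal{M}$ because it contains $\mathcal{H}\oplus\{0\}$ --- whereas you reach the same subspace by tracking the block entries of $W$ explicitly.
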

\begin{proof}
Let $W$ be a unitary operator on $\mathcal{H}\oplus\widetilde{\mathcal{H}}$ such that
\[W(T_{1}\oplus T_{2})=(T_{3}\oplus T_{4})W.\]
Let\[\mathcal{N}=\{Sy:y\in\mathcal{H}\oplus\{0\}\text{ and }S\in C^{*}(T_{1}\oplus T_{2},T_{3}\oplus T_{4},W)\}^{-\|\,\|},\]
where $C^{*}(T_{1}\oplus T_{2},T_{3}\oplus T_{4},W)$ is the unital $C^{*}$-subalgebra of $\mathcal{B}(\mathcal{H}\oplus\widetilde{\mathcal{H}})$ generated by $T_{1}\oplus
T_{2}$, $T_{3}\oplus T_{4}$ and $W$. Then $\mathcal{N}$ is a separable reducing subspace for $T_{1}\oplus T_{2}$, $T_{3}\oplus T_{4}$ and $W$. Since $\mathcal{H}\oplus\{0
\}\subset\mathcal{N}$, there exists a subspace $\mathcal{M}\subset\widetilde{\mathcal{H}}$ such that $\mathcal{N}=\mathcal{H}\oplus\mathcal{M}$, and thus $\mathcal{M}$ is
a separable reducing subspace for $T_{2}$ and $T_{4}$. Moreover, since $\mathcal{N}$ reduces $W$, $W|_{\mathcal{N}}$ is a unitary operator on $\mathcal{N}$ and satisfies
\[(W|_{\mathcal{N}})(T_{1}\oplus(T_{2}|_{\mathcal{M}}))=(T_{3}\oplus(T_{4}|_{\mathcal{M}}))(W|_{\mathcal{N}}).\]
Therefore,\[T_{1}\oplus(T_{2}|_{\mathcal{M}})\cong T_{3}\oplus(T_{4}|_{\mathcal{M}}).\]
\end{proof}
The Calkin representation yields an alternative proof of the following known result (see, e.g., \cite[Theorem 2.29]{Pearcy}):

{\it If $T,K\in\mathcal{K(H)}$ and $T\simeq_{a}K$ then $T\oplus 0_{\mathcal{H}}\cong K\oplus 0_{\mathcal{H}}$, where $0_{\mathcal{H}}$ is the zero operator on
$\mathcal{H}$.}

Since $T\simeq_{a}K$, there exists a sequence $(W_{n})_{n\geq 1}$ of unitary operators on $\mathcal{H}$ such that $\displaystyle\lim_{n\to\infty}\|T-W_{n}KW_{n}^{*}\|=0$.
Thus,
\[T^{\mathscr{U}}=(W_{1}KW_{1}^{*},W_{2}KW_{2}^{*},\ldots)_{\mathscr{U}}=(W_{1},W_{2},\ldots)_{\mathscr{U}}K^{\mathscr{U}}(W_{1},W_{2},\ldots)_{\mathscr{U}}^{*},\]
and so $T^{\mathscr{U}}\cong K^{\mathscr{U}}$. Since $T,K\in\mathcal{K(H)}$, this implies that
\[T\oplus 0_{\widehat{\mathcal{H}}}=T\oplus\widehat{T}=T^{\mathscr{U}}\cong K^{\mathscr{U}}=K\oplus\widehat{K}=K\oplus 0_{\widehat{\mathcal{H}}}.\]
By Lemma \ref{32}, $T\oplus 0_{\mathcal{H}}\cong K\oplus 0_{\mathcal{H}}$.

Let us recall a result of Voiculescu.
\begin{theorem}[\cite{Voiculescu}, Theorem 1.3]\label{33}
Let $T\in\mathcal{B(H)}$ and let $\rho$ be a unital representation of $C^{*}(\pi(T))$ on a separable Hilbert space $\mathcal{H}_{\rho}$. Then $T\simeq_{a}T\oplus\rho(
\pi(T))$.
\end{theorem}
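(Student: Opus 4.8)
The plan is to recognize Theorem \ref{33} as the absorption form of the non-commutative Weyl--von Neumann theorem and to reduce it to a statement about representations of a fixed separable $C^{*}$-algebra. Set $\mathcal{A}=C^{*}(T)\subseteq\mathcal{B(H)}$, the unital $C^{*}$-algebra generated by $T$; since $\pi$ is a $*$-homomorphism, $\pi(\mathcal{A})=C^{*}(\pi(T))$, so the composition $\sigma:=\rho\circ\pi|_{\mathcal{A}}\colon\mathcal{A}\to\mathcal{B}(\mathcal{H}_{\rho})$ is a unital representation, and because $\pi$ kills $\mathcal{K(H)}$ we have $\sigma(\mathcal{A}\cap\mathcal{K(H)})=0$. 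Writing $\iota$ for the inclusion representation of $\mathcal{A}$ on $\mathcal{H}$, I claim it suffices to prove
\[\iota\simeq_{a}\iota\oplus\sigma.\]
Indeed, because $\mathcal{A}$ is generated as a $C^{*}$-algebra by the single element $T$, a standard $\epsilon/3$ argument (using that unitary conjugation and the $C^{*}$-operations are continuous and that the quotient by $\mathcal{K}$ commutes with $*$-polynomials) shows that approximate unitary equivalence of the representations $\iota$ and $\iota\oplus\sigma$ is equivalent to approximate unitary equivalence of the single operators $\iota(T)=T$ and $(\iota\oplus\sigma)(T)=T\oplus\rho(\pi(T))$, which is exactly the assertion $T\simeq_{a}T\oplus\rho(\pi(T))$.

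The heart of the matter is therefore to establish the absorption $\iota\simeq_{a}\iota\oplus\sigma$ for a unital representation $\sigma$ of a separable $C^{*}$-algebra that annihilates the compacts lying in $\mathcal{A}$. The route I would take is Voiculescu's original argument via a quasicentral approximate unit: one first produces an increasing sequence $0\le e_{1}\le e_{2}\le\cdots$ of finite-rank positive contractions with $e_{n}\to I$ in SOT and $\|e_{n}a-ae_{n}\|\to 0$ for every $a\in\mathcal{A}$. Setting $d_{n}=(e_{n+1}-e_{n})^{1/2}$ and using the separability of $\mathcal{H}_{\rho}$, one builds, for each finite $F\subseteq\mathcal{A}$ and each $\epsilon>0$, a block-diagonal system of partial isometries whose ranges carry approximate copies of the separable representation $\sigma$; the annihilation of the compacts in $\mathcal{A}$ is precisely what guarantees that these cut-downs reproduce $\sigma(a)$ up to small norm error rather than some unrelated operator. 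Assembling these local approximations yields unitaries $W_{n}\colon\mathcal{H}\to\mathcal{H}\oplus\mathcal{H}_{\rho}$ for which $W_{n}\,\iota(a)\,W_{n}^{*}-(\iota\oplus\sigma)(a)$ is compact and tends to $0$ in norm for each $a\in\mathcal{A}$.

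Within the framework of this section one could instead phrase the absorption through the ultrapower decomposition $T^{\mathscr{U}}=T\oplus\widehat{T}$, where by Theorem \ref{31} the map $\pi(a)\mapsto\widehat{a}$ is a faithful representation of $C^{*}(\pi(T))$ on the nonseparable space $\widehat{\mathcal{H}}$; here the goal would be to show that this large representation swallows the separable representation $\rho(\pi(T))$, and then to transfer the resulting unitary equivalence of ultrapowers back to an approximate unitary equivalence of the separable operators by a compression argument in the style of Lemma \ref{32}. I expect the main obstacle to be exactly this absorption step, in either formulation: proving that the ambient identity-type representation contains $\sigma$ with the correct multiplicity is the genuinely deep content of the theorem, and it is there that the quasicentral approximate unit (or, equivalently, the homogeneity and nonseparability of $\widehat{\mathcal{H}}$) does the essential work, whereas the passage from the representation statement back to the stated operator statement is routine.
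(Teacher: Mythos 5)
The first thing to note is that the paper does not prove this statement at all: Theorem \ref{33} is quoted verbatim from Voiculescu's paper (\cite{Voiculescu}, Theorem 1.3) and is used downstream as a black box (it feeds into Theorem \ref{34}, and thence into Theorems \ref{35}--\ref{37} and \ref{310}). So there is no proof in the paper to compare yours against; what you are attempting is a reproof of Voiculescu's non-commutative Weyl--von Neumann theorem itself.

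Judged on its own terms, your write-up has a correct outer shell and a hollow core. The reduction is fine: setting $\mathcal{A}=C^{*}(T)$ and $\sigma=\rho\circ\pi|_{\mathcal{A}}$, the implication you actually need is only one direction (absorption at the level of representations, evaluated at the generator $a=T$, gives the operator statement), and the $\epsilon/3$/density argument you describe for the converse is standard, with compactness of the differences passing from $*$-polynomials to all of $\mathcal{A}$ because, for fixed $n$, the map $a\mapsto(\iota\oplus\sigma)(a)-W_{n}\iota(a)W_{n}^{*}$ is bounded and sends a dense subset of $\mathcal{A}$ into the closed ideal of compacts. But the entire content of the theorem lies in the step you yourself label ``the heart of the matter,'' and there you give only a prose description of Voiculescu's argument: the existence of a quasicentral approximate unit $(e_{n})$ for $\mathcal{K(H)}$ relative to the separable algebra $\mathcal{A}$, the Glimm-type construction (using $\sigma(\mathcal{A}\cap\mathcal{K(H)})=0$) of finite-rank pieces carrying approximate copies of $\sigma$, the bookkeeping needed to assemble these into honest unitaries $W_{n}\colon\mathcal{H}\to\mathcal{H}\oplus\mathcal{H}_{\rho}$, and the estimates showing that $W_{n}\iota(a)W_{n}^{*}-(\iota\oplus\sigma)(a)$ is compact and norm-small --- none of this is carried out, and you concede it is ``the genuinely deep content.'' The alternative ultrapower route you float has the same status: absorbing $\sigma$ into the Calkin-type representation $\pi(a)\mapsto\widehat{a}$ is not easier than the original statement, and in this paper the traffic runs the other way --- Theorem \ref{34} is \emph{deduced from} Theorem \ref{33}, so invoking that machinery here would be circular. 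As a roadmap your proposal correctly identifies Voiculescu's original strategy, which is presumably exactly why the paper cites the result rather than reproving it; as a proof, it is incomplete at the only step that matters.
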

If $T\in\mathcal{B(H)}$ and $\mathcal{M}$ is a separable reducing subspace for $\widehat{T}$, then $\pi(S)\to\widehat{S}|_{M}$ defines a unital representation of $C^{*}(
\pi(T))$ on $\mathcal{M}$. Applying Theorem \ref{33} to this representation, we obtain
\begin{theorem}\label{34}
Let $T\in \mathcal{B(H)}$ and let $\mathcal{M}$ be a separable reducing subspace for $\widehat{T}$. Then $T\simeq_{a}T\oplus(\widehat{T}|_{\mathcal{M}})$.
\end{theorem}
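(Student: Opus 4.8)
The plan is to manufacture, out of the reducing subspace $\mathcal{M}$, a unital representation $\rho$ of $C^{*}(\pi(T))$ on the \emph{separable} Hilbert space $\mathcal{M}$ whose value at $\pi(T)$ is exactly $\widehat{T}|_{\mathcal{M}}$, and then to quote Theorem \ref{33}. The candidate is the map already foreshadowed just before the statement, namely $\rho(\pi(S)):=\widehat{S}|_{\mathcal{M}}$, which I would write as $\rho=(\,\cdot\,|_{\mathcal{M}})\circ f$, where $f$ is the Calkin representation. The entire substance of the argument is to check that this composition is well defined and is a unital $*$-homomorphism into $\mathcal{B}(\mathcal{M})$.

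First I would record that $f$ carries $C^{*}(\pi(T))$ isometrically and $*$-isomorphically onto $C^{*}(\widehat{T})$, the unital $C^{*}$-subalgebra of $\mathcal{B}(\widehat{\mathcal{H}})$ generated by $\widehat{T}$; this is immediate from Theorem \ref{31} together with the identity $f(\pi(S))=\widehat{S}$, since a unital $*$-isomorphism sends the algebra generated by a generator onto the algebra generated by its image. Thus it suffices to show that restriction to $\mathcal{M}$ is a unital $*$-homomorphism from $C^{*}(\widehat{T})$ into $\mathcal{B}(\mathcal{M})$.

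The key step is to upgrade the hypothesis ``$\mathcal{M}$ reduces $\widehat{T}$'' to ``$\mathcal{M}$ reduces every operator in $C^{*}(\widehat{T})$.'' For this I would observe that the set of operators in $\mathcal{B}(\widehat{\mathcal{H}})$ for which $\mathcal{M}$ is reducing is precisely the commutant of the orthogonal projection onto $\mathcal{M}$, hence a von Neumann algebra; in particular it is closed under products, adjoints, and norm limits. Since it contains $\widehat{T}$, it therefore contains $\widehat{T}^{*}$, every polynomial in $\widehat{T}$ and $\widehat{T}^{*}$, and the norm closure of these, i.e. all of $C^{*}(\widehat{T})$. Consequently the restriction map $A\mapsto A|_{\mathcal{M}}$ is defined on all of $C^{*}(\widehat{T})$ and is linear, multiplicative, and adjoint-preserving; it is unital because $\widehat{I}=I_{\widehat{\mathcal{H}}}$ restricts to $I_{\mathcal{M}}$. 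This makes $\rho$ a unital representation of $C^{*}(\pi(T))$ on $\mathcal{M}$ with $\rho(\pi(T))=\widehat{T}|_{\mathcal{M}}$.

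Finally, since $\mathcal{M}$ is separable by hypothesis, Theorem \ref{33} applies to $\rho$ and yields $T\simeq_{a}T\oplus\rho(\pi(T))=T\oplus(\widehat{T}|_{\mathcal{M}})$, which is the desired conclusion. I expect no genuine obstacle: the only point requiring care is the passage of reducibility from the single generator $\widehat{T}$ to the whole generated $C^{*}$-algebra, and this is handled cleanly by the von Neumann algebra observation above; everything else is a direct invocation of Theorems \ref{31} and \ref{33}.
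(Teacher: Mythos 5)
Your proposal is correct and follows exactly the paper's route: the paper likewise obtains Theorem \ref{34} by noting that $\pi(S)\mapsto\widehat{S}|_{\mathcal{M}}$ defines a unital representation of $C^{*}(\pi(T))$ on $\mathcal{M}$ and then applying Theorem \ref{33}. The only difference is that you spell out the verification (via Theorem \ref{31} and the fact that the operators reduced by $\mathcal{M}$ form the von Neumann algebra $\{P_{\mathcal{M}}\}'$), which the paper leaves implicit.
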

\begin{theorem}\label{35}
If $T_{1},T_{2}\in\mathcal{B(H)}$ then $T_{1}\simeq_{a}T_{2}$ if and only if $T_{1}^{\mathscr{U}}\cong T_{2}^{\mathscr{U}}$.
\end{theorem}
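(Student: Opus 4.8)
The plan is to prove the two implications separately, with the forward one being essentially the computation already carried out above for compact operators. Suppose first that $T_1\simeq_a T_2$, so there is a sequence $(W_n)_{n\geq 1}$ of unitaries on $\mathcal{H}$ with $\|T_2-W_nT_1W_n^*\|\to 0$. Since ordinary convergence to $0$ forces $\lim_{n,\mathscr{U}}\|T_2-W_nT_1W_n^*\|=0$, the bounded sequences $(T_2,T_2,\ldots)$ and $(W_nT_1W_n^*)_{n\geq 1}$ induce the same operator on $\mathcal{H}^{\mathscr{U}}$; that is, $T_2^{\mathscr{U}}=(W_nT_1W_n^*)_{\mathscr{U}}=(W_n)_{\mathscr{U}}\,T_1^{\mathscr{U}}\,(W_n)_{\mathscr{U}}^*$. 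As $(W_n)_{\mathscr{U}}$ is unitary on $\mathcal{H}^{\mathscr{U}}$ (its adjoint is $(W_n^*)_{\mathscr{U}}$ and $(W_nW_n^*)_{\mathscr{U}}=I$), this gives $T_1^{\mathscr{U}}\cong T_2^{\mathscr{U}}$.

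For the reverse implication, suppose $T_1^{\mathscr{U}}\cong T_2^{\mathscr{U}}$. Using the decomposition $T_i^{\mathscr{U}}=T_i\oplus\widehat{T_i}$ with respect to $\mathcal{H}^{\mathscr{U}}=\mathcal{H}\oplus\widehat{\mathcal{H}}$, this reads $T_1\oplus\widehat{T_1}\cong T_2\oplus\widehat{T_2}$. The key step is to cut this relation down to a separable piece: applying Lemma \ref{32} with $T_1,T_2\in\mathcal{B(H)}$ in the first and third slots and $\widehat{T_1},\widehat{T_2}$ (operators on the nonseparable space $\widehat{\mathcal{H}}$) in the second and fourth slots, I obtain a separable reducing subspace $\mathcal{M}$ for both $\widehat{T_1}$ and $\widehat{T_2}$ such that $T_1\oplus(\widehat{T_1}|_{\mathcal{M}})\cong T_2\oplus(\widehat{T_2}|_{\mathcal{M}})$.

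Now $\mathcal{M}$ is a separable reducing subspace for each $\widehat{T_i}$, so Theorem \ref{34} yields $T_i\simeq_a T_i\oplus(\widehat{T_i}|_{\mathcal{M}})$ for $i=1,2$. Since unitary equivalence trivially implies approximate unitary equivalence (the relevant difference is $0$), chaining gives $T_1\simeq_a T_1\oplus(\widehat{T_1}|_{\mathcal{M}})\cong T_2\oplus(\widehat{T_2}|_{\mathcal{M}})\simeq_a T_2$. It then remains only to invoke that $\simeq_a$ is an equivalence relation: symmetry follows by replacing $(W_n)$ with $(W_n^*)$, and transitivity by composing conjugating sequences into $(V_nW_n)$, the compactness and norm-convergence of the relevant differences being preserved in each case. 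This yields $T_1\simeq_a T_2$ and completes the argument.

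The main obstacle is the nonseparability of $\widehat{\mathcal{H}}$, which blocks a direct appeal to Voiculescu's theorem (Theorem \ref{33}) and hence to Theorem \ref{34}; the separability reduction supplied by Lemma \ref{32} is precisely what removes this obstruction, after which Theorem \ref{34} does the substantive work. By contrast, the forward implication and the verification that $\simeq_a$ is symmetric and transitive are routine, and the latter could be recorded once before the theorem.
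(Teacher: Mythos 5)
Your proposal is correct and follows essentially the same route as the paper: the forward direction is the ultraproduct conjugation computation (the paper cites its earlier discussion of $T\simeq_a K$ for compacts), and the reverse direction uses the decomposition $T_i^{\mathscr{U}}=T_i\oplus\widehat{T_i}$, the separable reduction of Lemma \ref{32}, and Theorem \ref{34}, exactly as in the paper. Your explicit chaining via symmetry and transitivity of $\simeq_a$ merely spells out a step the paper leaves implicit.
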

\begin{proof}
If $T_{1}\simeq_{a}T_{2}$ then from a similar argument as in the discussion preceding Theorem \ref{33}, we have $T_{1}^{\mathscr{U}}\cong T_{2}^{\mathscr{U}}$. Conversely, suppose that $T_{1}^{
\mathscr{U}}\cong T_{2}^{\mathscr{U}}$. Then $T_{1}\oplus\widehat{T}_{1}\cong T_{2}\oplus\widehat{T}_{2}$, and thus by Lemma \ref{32}, there exists a separable reducing
subspace $\mathcal{M}$ for both $\widehat{T}_{1}$ and $\widehat{T}_{2}$ such that
\[T_{1}\oplus(\widehat{T}_{1}|_{\mathcal{M}})\cong T_{2}\oplus(\widehat{T}_{2}|_{\mathcal{M}}).\]
Thus, by Theorem \ref{34}, we obtain $T_{1}\simeq_{a}T_{2}$.
\end{proof}
Although we will not make use of Theorem \ref{35}, the proofs of the results below resemble the proof of this theorem.

Let $\mathcal{H}_{1}$ and $\mathcal{H}_{2}$ be Hilbert spaces and let $\lambda\geq 1$. Then two operators $T_{1}\in\mathcal{B}(\mathcal{H}_{1})$ and $T_{2}\in\mathcal{B}(
\mathcal{H}_{2})$ are {\it $\lambda$-similar} if there is an invertible operator $S\in\mathcal{B}(\mathcal{H}_{1},\mathcal{H}_{2})$ such that $T_{2}=ST_{1}S^{-1}$ and
$\|S\|\|S^{-1}\|\leq\lambda$.
\begin{theorem}\label{36}
Suppose that $T_{1},T_{2}\in\mathcal{B(H)}$. If $\lambda\geq 1$ and
\[T_{2}\in\{ST_{1}S^{-1}:S\in\mathcal{B(H)}\text{ with }\|S\|\|S^{-1}\|\leq\lambda\}^{-\|\,\|},\]
then there exists a sequence $(S_{n})_{n\geq 1}$ of invertible operators on $\mathcal{H}$ with $\|S_{n}\|\|S_{n}^{-1}\|\leq\lambda$ such that $\displaystyle\lim_{n\to
\infty}\|T_{2}-S_{n}T_{1}S_{n}^{-1}\|=0$ and $T_{2}-S_{n}T_{1}S_{n}^{-1}\in\mathcal{K(H)}$.
\end{theorem}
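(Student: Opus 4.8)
The plan is to run the argument of Theorem \ref{35} with approximate unitary equivalence replaced by approximate $\lambda$-similarity, and then to extract the sequence $(S_n)$ from the resulting chain of equivalences. The hypothesis produces a sequence $(S_n)$ of invertible operators with $\|S_n\|\|S_n^{-1}\|\le\lambda$ and $\|T_2-S_nT_1S_n^{-1}\|\to 0$. Since $S_nT_1S_n^{-1}$ is unchanged if $S_n$ is multiplied by a scalar, I would first rescale each $S_n$ so that $\|S_n\|\le\sqrt{\lambda}$ and $\|S_n^{-1}\|\le\sqrt{\lambda}$; now both $(S_n)$ and $(S_n^{-1})$ are bounded. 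Forming the ultraproducts $S:=(S_1,S_2,\ldots)_{\mathscr U}$ and $(S_1^{-1},S_2^{-1},\ldots)_{\mathscr U}$, one checks they are mutually inverse, so $S$ is invertible with $\|S\|\|S^{-1}\|\le\lambda$; and the vanishing of $\|T_2-S_nT_1S_n^{-1}\|$ forces $(S_nT_1S_n^{-1})_{\mathscr U}=T_2^{\mathscr U}$. Hence $S\,T_1^{\mathscr U}S^{-1}=T_2^{\mathscr U}$, i.e.\ the ultrapowers are genuinely $\lambda$-similar.

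Next I would invoke the decomposition $T_i^{\mathscr U}=T_i\oplus\widehat{T}_i$, so that $T_1\oplus\widehat{T}_1$ is $\lambda$-similar to $T_2\oplus\widehat{T}_2$, and then reduce to a separable piece via a similarity analog of Lemma \ref{32}. The proof of Lemma \ref{32} adapts once one replaces the intertwining unitary by the invertible intertwiner $S$ and observes that $S^{-1}\in C^{*}(S)$: indeed $S^{-1}=(S^{*}S)^{-1}S^{*}$, and $(S^{*}S)^{-1}$ lies in $C^{*}(S)$ by the continuous functional calculus, since $S^{*}S$ is positive and invertible. Thus the separable reducing subspace built from the relevant $C^{*}$-algebra also reduces $S$ and $S^{-1}$, and the restriction of $S$ to it witnesses that $T_1\oplus(\widehat{T}_1|_{\mathcal M})$ is $\lambda$-similar to $T_2\oplus(\widehat{T}_2|_{\mathcal M})$ for some separable reducing subspace $\mathcal M$ of $\widehat{\mathcal H}$ for both $\widehat{T}_1$ and $\widehat{T}_2$. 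Applying Theorem \ref{34} to each operator gives $T_1\simeq_{a}T_1'$ and $T_2\simeq_{a}T_2'$, where $T_1':=T_1\oplus(\widehat{T}_1|_{\mathcal M})$ and $T_2':=T_2\oplus(\widehat{T}_2|_{\mathcal M})$ are $\lambda$-similar, say $T_2'=RT_1'R^{-1}$ with $\|R\|\|R^{-1}\|\le\lambda$. This realizes the chain $T_1\simeq_{a}T_1'\sim T_2'\simeq_{a}T_2$ announced in the introduction.

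Finally I would convert this chain into the asserted sequence. Let $W_n^{(1)}$ and $W_n^{(2)}$ be the unitaries implementing $T_1\simeq_{a}T_1'$ and $T_2\simeq_{a}T_2'$, so that $K_n^{(1)}:=T_1'-W_n^{(1)}T_1(W_n^{(1)})^{*}$ and $K_n^{(2)}:=T_2'-W_n^{(2)}T_2(W_n^{(2)})^{*}$ are compact with norms tending to $0$. Setting $S_n:=(W_n^{(2)})^{*}R\,W_n^{(1)}$, an invertible operator on $\mathcal H$ with $\|S_n\|\|S_n^{-1}\|\le\|R\|\|R^{-1}\|\le\lambda$, a direct substitution using $RT_1'R^{-1}=T_2'$ collapses to the clean identity
\[
T_2-S_nT_1S_n^{-1}=(W_n^{(2)})^{*}\bigl(RK_n^{(1)}R^{-1}-K_n^{(2)}\bigr)W_n^{(2)}.
\]
The right-hand side is compact, being a unitary conjugate of the compact operator $RK_n^{(1)}R^{-1}-K_n^{(2)}$, and its norm is at most $\|R\|\|R^{-1}\|\,\|K_n^{(1)}\|+\|K_n^{(2)}\|\to 0$, which is exactly the conclusion.

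I expect the conceptual heart to be the first step, where passing to ultrapowers upgrades approximate $\lambda$-similarity to genuine $\lambda$-similarity of $T_1^{\mathscr U}$ and $T_2^{\mathscr U}$, just as ultrapowers turn $\simeq_{a}$ into $\cong$ in Theorem \ref{35}. The main obstacle, however, will be the bookkeeping of the final step: one must choose $S_n$ so that all three requirements hold simultaneously, and in particular the condition-number bound $\|S_n\|\|S_n^{-1}\|\le\lambda$ must be preserved through the rescaling of the first step, through the passage to the separable reducing subspace (where one needs that $S^{-1}$, not merely $S$, reduces the subspace), and through the conjugation by the approximating unitaries.
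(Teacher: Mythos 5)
Your proof is correct and follows essentially the same route as the paper: pass to ultrapowers to upgrade approximate $\lambda$-similarity to genuine $\lambda$-similarity of $T_1^{\mathscr U}$ and $T_2^{\mathscr U}$, use $T_i^{\mathscr U}=T_i\oplus\widehat{T}_i$ and a similarity variant of Lemma \ref{32} to descend to a separable reducing subspace, then finish with Theorem \ref{34}. In fact you supply two details the paper leaves implicit --- the observation that $S^{-1}=(S^{*}S)^{-1}S^{*}\in C^{*}(S)$, which is exactly what makes the ``variation of Lemma \ref{32}'' work, and the explicit construction $S_n=(W_n^{(2)})^{*}RW_n^{(1)}$ with its compactness and norm estimates --- so your write-up is, if anything, more complete.
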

\begin{proof}
Let $(R_{n})_{n\geq 1}$ be a sequence in $\mathcal{B(H)}$ with $\|R_{n}\|\|R_{n}^{-1}\|\leq\lambda$ such that $\displaystyle\lim_{n\to\infty}\|T_{2}-R_{n}T_{1}R_{n}^{-1}\|
=0$. Without loss of generality, we may assume that $\|R_{n}\|\leq\lambda$ and $\|R_{n}^{-1}\|\leq 1$ so that $\displaystyle\sup_{n\geq 1}\|R_{n}\|,\sup_{n\geq 1}\|R_{n}^{
-1}\|<\infty$. Then
\[T_{2}^{\mathscr{U}}=(R_{1}T_{1}R_{1}^{-1},R_{2}T_{1}R_{2}^{-1},R_{3}T_{1}R_{3}^{-1},\ldots)_{\mathscr{U}}=(R_{1},R_{2},R_{3},\ldots)_{\mathscr{U}}T_{1}^{\mathscr{U}}(R_{
1},R_{2},R_{3},\ldots)_{\mathscr{U}}^{-1}.\]
Hence, $T_{1}^{\mathscr{U}}$ is $\lambda$-similar to $T_{2}^{\mathscr{U}}$, and so $T_{1}\oplus\widehat{T}_{1}$ is $\lambda$-similar to $T_{2}\oplus\widehat{T}_{2}$. By a
variation of Lemma \ref{32}, there exists a separable reducing subspace $\mathcal{M}$ for both $\widehat{T}_{1}$ and $\widehat{T}_{2}$ such that $T_{1}\oplus(\widehat{T}_{
1}|_{\mathcal{M}})$
is $\lambda$-similar to $T_{2}\oplus(\widehat{T}_{2}|_{\mathcal{M}})$. By Theorem \ref{34}, the result follows.
\end{proof}
The preceding theorem was proved in \cite{Voiculescu} for $\lambda=1$ (i.e., $T_{2}\in\mathcal{U}(T_{1})^{-\|\,\|}\Rightarrow T_{1}\simeq_{a}T_{2}$) by applying Theorem \ref{33} in a different way.

The rest of this paper is mainly devoted to proving Theorem \ref{37} below.

In the sequel, we say that an operator $T_{1}\in \mathcal{B(H)}$ is a restriction of another operator $T_{2}\in \mathcal{B(H)}$ to mean that $T_{1}$ is unitarily
equivalent to a restriction of $T_{2}$. We do the same thing for compression and reducing part. This is to simplify our presentation.
\begin{theorem}[\cite{Hadwin1}, Theorem 4.3 and \cite{Hadwin2}, Theorem 4.4]\label{37}
Let $T\in \mathcal{B(H)}$. Then
\begin{equation}\label{31e}
\mathcal{U}(T)^{-SOT}=\{B\in \mathcal{B(H)}:B\text{ is a restriction of an operator in }\mathcal{U}(T)^{-\|\,\|}\},
\end{equation}
\begin{equation*}
\mathcal{U}(T)^{-WOT}=\{B\in \mathcal{B(H)}:B\text{ is a compression of an operator in }\mathcal{U}(T)^{-\|\,\|}\}.
\end{equation*}
\begin{equation*}
\mathcal{U}(T)^{-*\hyphen SOT}=\{B\in \mathcal{B(H)}:B\text{ is a reducing part of an operator in }\mathcal{U}(T)^{-\|\,\|}\},
\end{equation*}
\end{theorem}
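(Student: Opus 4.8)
The plan is to prove each of the three set equalities by establishing both inclusions separately, handling the SOT, WOT and $*\hyphen SOT$ cases in parallel. Since $\mathcal{U}(T)$ is norm bounded and $\mathcal{H}$ is separable, all three topologies are metrizable on $\mathcal{U}(T)$, so in each case membership in the closure can be witnessed by a \emph{sequence} $(W_{n})_{n\geq 1}$ of unitaries with $W_{n}TW_{n}^{*}$ converging to $B$ in the relevant topology. The forward inclusion ($\subseteq$) will follow the ultraproduct strategy of Theorems \ref{35} and \ref{36}, while the reverse inclusion ($\supseteq$) is elementary.

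For the reverse inclusion, I would first note that each of $\mathcal{U}(T)^{-SOT}$, $\mathcal{U}(T)^{-WOT}$, $\mathcal{U}(T)^{-*\hyphen SOT}$ is invariant under conjugation by a unitary, since $X\mapsto UXU^{*}$ is continuous in each topology and carries $\mathcal{U}(T)$ into itself. As $C\in\mathcal{U}(T)^{-\|\,\|}$ lies in each closure, it follows that $\mathcal{U}(C)$, hence its closure, is contained in the corresponding closure of $\mathcal{U}(T)$. Thus it suffices to show that a restriction (resp.\ compression, reducing part) of a \emph{fixed} operator $C$ lies in $\mathcal{U}(C)^{-SOT}$ (resp.\ $\mathcal{U}(C)^{-WOT}$, $\mathcal{U}(C)^{-*\hyphen SOT}$). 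Writing the relevant subspace $\mathcal{M}$ (which is infinite dimensional since $B\in\mathcal{B(H)}$) via a unitary $J\colon\mathcal{M}\to\mathcal{H}$, fixing an orthonormal basis $(\xi_{i})$ of $\mathcal{H}$ and setting $\eta_{i}=J^{*}\xi_{i}$, I would construct unitaries $V_{k}$ on $\mathcal{H}$ with $V_{k}^{*}\xi_{i}=\eta_{i}$ for $i\leq k$; such $V_{k}$ exist because the orthogonal complements of $\{\xi_{i}:i\leq k\}$ and $\{\eta_{i}:i\leq k\}$ are both infinite dimensional. For WOT a direct computation gives $\langle V_{k}CV_{k}^{*}\xi_{i},\xi_{j}\rangle=\langle B\xi_{i},\xi_{j}\rangle$ exactly for $i,j\leq k$; for SOT and $*\hyphen SOT$ one checks that $V_{k}CV_{k}^{*}\xi_{i}$ (and, for $*\hyphen SOT$, also its adjoint applied to $\xi_{i}$) differs from $B\xi_{i}$ (resp.\ $B^{*}\xi_{i}$) only by a tail that vanishes as $k\to\infty$, using that $\mathcal{M}$ is invariant (resp.\ reducing) so that $C\eta_{i}$ (resp.\ also $C^{*}\eta_{i}$) lies in $\mathcal{M}$.

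For the forward inclusion, given such a sequence $(W_{n})$ I would pass to the ultraproduct $A:=(W_{1}TW_{1}^{*},W_{2}TW_{2}^{*},\ldots)_{\mathscr{U}}=(W_{n})_{\mathscr{U}}\,T^{\mathscr{U}}\,(W_{n})_{\mathscr{U}}^{*}$, which is unitarily equivalent to $T^{\mathscr{U}}=T\oplus\widehat{T}$. The key observation is that the mode of convergence of $W_{n}TW_{n}^{*}$ to $B$ is reflected in the action of $A$ on $\widehat{\mathcal{H}}^{\perp}=\{(x)_{\mathscr{U}}:x\in\mathcal{H}\}$, which I identify with $\mathcal{H}$: norm convergence of $W_{n}TW_{n}^{*}x$ to $Bx$ gives $A(x)_{\mathscr{U}}=(Bx)_{\mathscr{U}}$, so in the SOT case $\widehat{\mathcal{H}}^{\perp}$ is invariant under $A$ with $A|_{\widehat{\mathcal{H}}^{\perp}}=B$; in the $*\hyphen SOT$ case the same applied to $A^{*}$ shows $\widehat{\mathcal{H}}^{\perp}$ reduces $A$; and in the WOT case the identity $\langle A(x)_{\mathscr{U}},(y)_{\mathscr{U}}\rangle=\lim_{n,\mathscr{U}}\langle W_{n}TW_{n}^{*}x,y\rangle=\langle Bx,y\rangle$ shows the compression of $A$ to $\widehat{\mathcal{H}}^{\perp}$ is $B$. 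Hence $B$ is a restriction (resp.\ compression, reducing part) of $A\cong T\oplus\widehat{T}$.

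The remaining, and I expect hardest, step is to replace the nonseparable operator $T\oplus\widehat{T}$ by a \emph{separable} operator in $\mathcal{U}(T)^{-\|\,\|}$. Transporting $\widehat{\mathcal{H}}^{\perp}$ through the unitary $(W_{n})_{\mathscr{U}}$ produces a separable subspace $\mathcal{N}$ of $\mathcal{H}\oplus\widehat{\mathcal{H}}$ that is invariant (resp.\ used for compression, reducing) for $T\oplus\widehat{T}$ with the correct restriction (resp.\ compression, reducing part). I would then let $\mathcal{M}$ be the reducing subspace of $\widehat{T}$ generated by the projection of $\mathcal{N}$ onto $\widehat{\mathcal{H}}$; this $\mathcal{M}$ is separable and $\mathcal{N}\subseteq\mathcal{H}\oplus\mathcal{M}$, where $\mathcal{H}\oplus\mathcal{M}$ reduces $T\oplus\widehat{T}$. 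Since the structure of $\mathcal{N}$ relative to $T\oplus\widehat{T}$ is inherited by the restriction $(T\oplus\widehat{T})|_{\mathcal{H}\oplus\mathcal{M}}=T\oplus(\widehat{T}|_{\mathcal{M}})$, it follows that $B$ is a restriction (resp.\ compression, reducing part) of $T\oplus(\widehat{T}|_{\mathcal{M}})$. Finally, Theorem \ref{34} gives $T\simeq_{a}T\oplus(\widehat{T}|_{\mathcal{M}})$, so after identifying its separable domain with $\mathcal{H}$ this operator lies in $\mathcal{U}(T)^{-\|\,\|}$, completing the forward inclusion. The main obstacle is precisely this descent to a separable reducing subspace, together with checking that invariance/reducedness and the value of the restriction or compression survive the passage.
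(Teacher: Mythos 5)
Your proposal is correct and follows essentially the same route as the paper: your forward inclusion is exactly the paper's Lemma \ref{38} (the mode of convergence of $W_{n}TW_{n}^{*}$ is captured by the action of the ultraproduct on $\widehat{\mathcal{H}}^{\perp}$), followed by the descent to a separable reducing subspace (the ``variation of Lemma \ref{32}'' that the paper invokes, which you prove directly) and an appeal to Theorem \ref{34}; your reverse inclusion, via unitaries matching growing finite orthonormal systems of the invariant (resp.\ reducing) subspace with a fixed basis of $\mathcal{H}$, is the same construction as the paper's Lemma \ref{39}. The only differences are presentational: you make explicit the reduction to a fixed $C\in\mathcal{U}(T)^{-\|\,\|}$, the metrizability justification for using sequences, and the separable-descent argument, all of which the paper leaves implicit or delegates to cited lemmas.
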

The idea of this result is the following lemma.
\begin{lemma}\label{38}
Let $(T_{n})_{n\geq 1}$ be a sequence in $\mathcal{B(H)}$ and let $B\in\mathcal{B(H)}$.
\begin{enumerate}[(1)]
\item If $T_{n}\to B$ in SOT then $B$ is a restriction of $(T_{1},T_{2},\ldots)_{\mathscr{U}}$.
\item If $T_{n}\to B$ in WOT then $B$ is a compression of $(T_{1},T_{2},\ldots)_{\mathscr{U}}$.
\item If $T_{n}\to B$ in $*$-SOT then $B$ is a redcing part of $(T_{1},T_{2},\ldots)_{\mathscr{U}}$.
\end{enumerate}
\end{lemma}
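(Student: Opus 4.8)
The plan is to realize $B$ through the canonical isometric copy of $\mathcal{H}$ sitting inside $\mathcal{H}^{\mathscr{U}}$, namely $\widehat{\mathcal{H}}^{\perp}=\{(x)_{\mathscr{U}}:x\in\mathcal{H}\}$, which we identify with $\mathcal{H}$ via the unitary $x\mapsto(x)_{\mathscr{U}}$. First I note that SOT (and hence WOT, and $*$-SOT) convergence of $(T_{n})$ forces $\sup_{n}\|T_{n}\|<\infty$ by the uniform boundedness principle, exactly as in the discussion preceding Theorem \ref{33}, so that the ultraproduct $(T_{1},T_{2},\ldots)_{\mathscr{U}}$ is a well-defined bounded operator. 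The entire argument then rests on the single computation $(T_{1},T_{2},\ldots)_{\mathscr{U}}(x)_{\mathscr{U}}=(T_{n}x)_{\mathscr{U}}$ for $x\in\mathcal{H}$, which I will compare with $(Bx)_{\mathscr{U}}$ in each of the three topologies.

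For (1), SOT convergence means $\|T_{n}x-Bx\|\to0$, so the ordinary limit exists and therefore agrees with its ultralimit; thus $\lim_{n,\mathscr{U}}\|T_{n}x-Bx\|=0$, which says $(T_{n}x)_{\mathscr{U}}=(Bx)_{\mathscr{U}}$ in $\mathcal{H}^{\mathscr{U}}$. Hence $\widehat{\mathcal{H}}^{\perp}$ is invariant under $(T_{1},T_{2},\ldots)_{\mathscr{U}}$ and, under the identification above, the restriction is exactly $B$. For (2), WOT convergence means $\langle T_{n}x,y\rangle\to\langle Bx,y\rangle$ for every $y\in\mathcal{H}$, so again the ultralimit equals the limit and $w\hyphen\lim_{n,\mathscr{U}}T_{n}x=Bx$. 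Applying the explicit formula (established before Theorem \ref{31}) for the orthogonal projection $P$ onto $\widehat{\mathcal{H}}^{\perp}$, which sends $(x_{n})_{\mathscr{U}}$ to $(w\hyphen\lim_{k,\mathscr{U}}x_{k})_{\mathscr{U}}$, I obtain $P(T_{1},T_{2},\ldots)_{\mathscr{U}}(x)_{\mathscr{U}}=(Bx)_{\mathscr{U}}$, which says precisely that the compression of $(T_{1},T_{2},\ldots)_{\mathscr{U}}$ to $\widehat{\mathcal{H}}^{\perp}$ is $B$.

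For (3), I would simply apply part (1) to the sequences $(T_{n})$ and $(T_{n}^{*})$ simultaneously. Since $*$-SOT convergence gives both $T_{n}\to B$ and $T_{n}^{*}\to B^{*}$ in SOT, part (1) shows that $\widehat{\mathcal{H}}^{\perp}$ is invariant under both $(T_{1},T_{2},\ldots)_{\mathscr{U}}$ and its adjoint $(T_{1}^{*},T_{2}^{*},\ldots)_{\mathscr{U}}$, with restrictions $B$ and $B^{*}$ respectively; hence $\widehat{\mathcal{H}}^{\perp}$ reduces the ultraproduct and $B$ is the corresponding reducing part.

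I do not expect a genuine obstacle, since the structure parallels the proof of Theorem \ref{35}. The one point requiring care is that the weak ultralimit in (2) is legitimately computed: the sequence $(T_{n}x)$ is bounded, so $w\hyphen\lim_{n,\mathscr{U}}T_{n}x$ exists, and the passage from the ordinary WOT-limit to the weak ultralimit uses only that a convergent scalar sequence has ultralimit equal to its limit. The identification of $\widehat{\mathcal{H}}^{\perp}$ with $\mathcal{H}$ and the projection formula are both available from the material preceding Theorem \ref{31}, so no further machinery is needed.
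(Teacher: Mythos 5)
Your proposal is correct and follows essentially the same route as the paper: realize $B$ on the canonical copy $\{(x)_{\mathscr{U}}:x\in\mathcal{H}\}$ of $\mathcal{H}$ inside $\mathcal{H}^{\mathscr{U}}$, check invariance for SOT, use the explicit projection formula for WOT, and combine the SOT case for $(T_{n})$ and $(T_{n}^{*})$ (via $(T_{1},T_{2},\ldots)_{\mathscr{U}}^{*}=(T_{1}^{*},T_{2}^{*},\ldots)_{\mathscr{U}}$) for $*$-SOT. Your write-up merely makes explicit the ultralimit computations that the paper leaves implicit.
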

\begin{proof}
By the uniform boundedness principle, $\displaystyle\sup_{n\geq 1}\|T_{n}\|<\infty$ so that the ultraproduct\\
$(T_{1},T_{2},\ldots)_{\mathscr{U}}$ is well defined.

If $T_{n}\to B$ in SOT then $\{(x)_{\mathscr{U}}:x\in\mathcal{H}\}$ is invariant under $(T_{1},T_{2},\ldots)_{\mathscr{U}}$, and
$B\cong (T_{1},T_{2},\ldots)_{\mathscr{U}}|_{\{(x)_{\mathscr{U}}:x\in\mathcal{H}\}}$.

Suppose that $T_{n}\to B$ in WOT. Recall that the orthogonal projection from $\mathcal{H}^{\mathscr{U}}$ onto $\{(x)_{\mathscr{U}}:x\in\mathcal{H}\}$ is given by
$\displaystyle (x_{n})_{\mathscr{U}}\mapsto(w\hyphen\lim_{k,\mathscr{U}}x_{k})_{\mathscr{U}}$. Thus, the compression of $(T_{1},T_{2},\ldots)_{\mathscr{U}}$ to
$\{(x)_{\mathscr{U}}:x\in\mathcal{H}\}$ is given by $\displaystyle(x)_{\mathscr{U}}\mapsto (w\hyphen\lim_{k,\mathscr{U}}T_{k}x)_{\mathscr{U}}=(Bx)_{\mathscr{U}}$. Hence,
$B$ is a compression of $(T_{1},T_{2},\ldots)_{\mathscr{U}}$.

If $T_{n}\to B$ in $*$-SOT then $\{(x)_{\mathscr{U}}:x\in\mathcal{H}\}$ is a reducing subspace for $(T_{1},T_{2},\ldots)_{\mathscr{U}}$, and $B\cong (T_{1},T_{2},\ldots)_{
\mathscr{U}}|_{\{(x)_{\mathscr{U}}:x\in\mathcal{H}\}}$.
\end{proof}
\begin{proof}[Proof of Theorem \ref{37}]
If $B\in\mathcal{U}(T)^{-SOT}$ then there exists a sequence $(W_{n})_{n\geq 1}$ of unitary operators in $\mathcal{B(H)}$ such that $W_{n}TW_{n}^{*}\to B$ in SOT.
Thus, by Lemma \ref{38}, $B$ is a restriction of $(W_{1}TW_{1}^{*},W_{2}TW_{2}^{*},\ldots)_{\mathscr{U}}\cong T^{\mathscr{U}}\cong T\oplus\widehat{T}$. Hence, there exists
a separable reducing subspace $\mathcal{M}$ for $\widehat{T}$ such that $B$ is a restriction of $T\oplus(\widehat{T}|_{\mathcal{M}})$. But by Theorem \ref{34},
$T\oplus(\widehat{T}|_{\mathcal{M}})\simeq_{a}T$. Therefore, one inclusion of (\ref{31e}) is proved.

The proof of the other inclusion here is more or less the same as that in \cite{Hadwin2}. But we include it here for self-containedness. To prove this
inclusion, it suffices to show that if $B$ is a restriction of $T$, then $B\in\mathcal{U}(T)^{-SOT}$. This is an immediate consequence of the next lemma. Thus, the proof
of (\ref{31e}) is complete.

The proofs of the other assertions are similar using some variations of the next lemma.
\end{proof}
\begin{lemma}\label{39}
Suppose that $T\in\mathcal{B}(\mathcal{H}\oplus\mathcal{H})$ and that $\mathcal{H}\oplus\{0\}$ is an invariant subspace for $T$. Let $B=T|_{\mathcal{H}\oplus\{0\}}\in
\mathcal{B}(\mathcal{H}\oplus\{0\})$. Then there exists a sequence $(W_{n})_{n\geq 1}$ of unitary operators in $\mathcal{B}(\mathcal{H}\oplus\mathcal{H},\mathcal{H}\oplus
\{0\})$ such that $W_{n}TW_{n}^{*}\to B$ in SOT.
\end{lemma}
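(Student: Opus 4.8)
The plan is to read off the structure of $T$ from the hypothesis and then to approximate $B$ by conjugates of $T$ that progressively absorb the complementary summand into the tail of the space. Since $\mathcal{H}\oplus\{0\}$ is invariant for $T$, with respect to $\mathcal{H}\oplus\mathcal{H}=(\mathcal{H}\oplus\{0\})\oplus(\{0\}\oplus\mathcal{H})$ the operator $T$ is block upper triangular with $(1,1)$ entry equal to $B$; in particular $Tu\in\mathcal{H}\oplus\{0\}$ whenever $u\in\mathcal{H}\oplus\{0\}$, and the expression of $Tu$ inside $\mathcal{H}\oplus\{0\}$ is exactly $Bu$. The idea is to choose unitaries $W_{n}\colon\mathcal{H}\oplus\mathcal{H}\to\mathcal{H}\oplus\{0\}$ that act as the identity on a growing finite-dimensional piece of the invariant summand and send everything else far out; conjugating $T$ by such $W_{n}$ then agrees with $B$ on that finite piece up to an error that vanishes.

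Concretely, first I would fix an orthonormal basis $(e_{k})_{k\geq 1}$ of $\mathcal{H}$ and write $u_{k}=(e_{k},0)$, $v_{k}=(0,e_{k})$ for the resulting orthonormal basis of the domain $\mathcal{H}\oplus\mathcal{H}$, and $\tilde{e}_{k}=(e_{k},0)$ for the orthonormal basis of the target $\mathcal{H}\oplus\{0\}$. For each $n$ I define $W_{n}$ by $W_{n}u_{k}=\tilde{e}_{k}$ for $1\leq k\leq n$, and extend it to any unitary carrying the orthonormal set $\{u_{k}:k>n\}\cup\{v_{k}:k\geq 1\}$ bijectively onto $\{\tilde{e}_{l}:l>n\}$; such a unitary exists because both of these orthonormal sets are countably infinite. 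This makes $W_{n}$ a genuine unitary in $\mathcal{B}(\mathcal{H}\oplus\mathcal{H},\mathcal{H}\oplus\{0\})$.

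The key estimate comes next. Fix $k$ and take $n\geq k$, so that $W_{n}^{*}\tilde{e}_{k}=u_{k}$. By invariance $Tu_{k}\in\mathcal{H}\oplus\{0\}$, so writing $b_{jk}=\langle Tu_{k},u_{j}\rangle$ we have $Tu_{k}=\sum_{j}b_{jk}u_{j}$ and $B\tilde{e}_{k}=\sum_{j}b_{jk}\tilde{e}_{j}$. Applying $W_{n}$, the terms with $j\leq n$ reproduce $\sum_{j\leq n}b_{jk}\tilde{e}_{j}$ exactly, while the terms with $j>n$ are carried into $\overline{\operatorname{span}}\{\tilde{e}_{l}:l>n\}$. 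Since $B\tilde{e}_{k}$ also splits into the same head $\sum_{j\leq n}b_{jk}\tilde{e}_{j}$ and a tail lying in $\overline{\operatorname{span}}\{\tilde{e}_{l}:l>n\}$, subtracting and using orthonormality gives $\|W_{n}TW_{n}^{*}\tilde{e}_{k}-B\tilde{e}_{k}\|\leq 2\bigl(\sum_{j>n}|b_{jk}|^{2}\bigr)^{1/2}$, which tends to $0$ as $n\to\infty$ because $\sum_{j}|b_{jk}|^{2}=\|Tu_{k}\|^{2}<\infty$. Hence $W_{n}TW_{n}^{*}\tilde{e}_{k}\to B\tilde{e}_{k}$ for every basis vector. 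Finally, since $\|W_{n}TW_{n}^{*}\|=\|T\|$ for all $n$, a standard three-term estimate upgrades convergence on the basis to convergence on all of $\mathcal{H}\oplus\{0\}$, which yields $W_{n}TW_{n}^{*}\to B$ in SOT.

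I expect the only real subtlety to be the role of the invariance hypothesis: it is precisely what forces $Tu_{k}\in\mathcal{H}\oplus\{0\}$, so that conjugation by $W_{n}$ matches the action of $B$ on the head and leaves only the vanishing tail as error. Were $\mathcal{H}\oplus\{0\}$ merely a subspace, $Tu_{k}$ would have a nonzero component in $\{0\}\oplus\mathcal{H}$ whose image under $W_{n}$ would persist in the tail and obstruct convergence to $B$. The remaining points, namely that $W_{n}$ is honestly a unitary onto $\mathcal{H}\oplus\{0\}$ and that the three-term argument closes, are routine.
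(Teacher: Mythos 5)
Your proof is correct and takes essentially the same approach as the paper: the paper likewise constructs unitaries $W_{n}\colon\mathcal{H}\oplus\mathcal{H}\to\mathcal{H}\oplus\{0\}$ that act as the identity on $P_{n}\mathcal{H}\oplus\{0\}$ for finite rank orthogonal projections $P_{n}\to I$ in SOT and carry the orthogonal complement into the tail $(I-P_{n})\mathcal{H}\oplus\{0\}$; your construction is exactly this with $P_{n}$ the projection onto $\mathrm{span}\{e_{1},\dots,e_{n}\}$. The only cosmetic difference is in finishing the estimate: you prove convergence on basis vectors via the matrix entries $b_{jk}$ and then upgrade by uniform boundedness and density, whereas the paper bounds $\|(B-W_{n}TW_{n}^{*})(x,0)\|$ directly by splitting $x=P_{n}x+(I-P_{n})x$.
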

\begin{proof}
Let $P_{n}$ be a sequence of finite rank orthogonal projections converging in SOT to the identity operator on $\mathcal{H}$. Let $W_{n}:\mathcal{H}\oplus\mathcal{H}\to
\mathcal{H}\oplus\{0\}$ be a unitary operator such that
\[W_{n}(x,0)=(x,0),\quad x\in P_{n}\mathcal{H}.\]
Then
\[W_{n}[(I-P_{n})\mathcal{H}\oplus\mathcal{H}]=(I-P_{n})\mathcal{H}\oplus\{0\}.\]
For $x\in P_{n}\mathcal{H}$,
\begin{eqnarray*}
(B-W_{n}TW_{n}^{*})(x,0)&=&B(x,0)-W_{n}T(x,0)
\\&=&B(x,0)-W_{n}B(x,0)
\\&=&B(x,0)-W_{n}(P_{n}\oplus 0)B(x,0)\\&&-W_{n}((I-P_{n})\oplus 0)B(x,0)
\\&=&B(x,0)-(P_{n}\oplus 0)B(x,0)\\&&-W_{n}((I-P_{n})\oplus 0)B(x,0)
\\&=&((I-P_{n})\oplus 0)B(x,0)-W_{n}((I-P_{n})\oplus 0)B(x,0),
\end{eqnarray*}
and thus,
\[\|(B-W_{n}TW_{n}^{*})(x,0)\|\leq 2\|((I-P_{n})\oplus 0)B(x,0)\|,\quad x\in P_{n}\mathcal{H}.\]
Hence, for $x\in\mathcal{H}$,
\begin{eqnarray*}
\|(B-W_{n}TW_{n}^{*})(x,0)\|&\leq&\|(B-W_{n}TW_{n}^{*})(P_{n}x,0)\|\\&&+\|(B-W_{n}TW_{n}^{*})((I-P_{n})x,0)\|
\\&\leq&2\|((I-P_{n})\oplus 0)B(P_{n}x,0)\|\\&&+\|B-W_{n}TW_{n}^{*}\|\|(I-P_{n})x\|
\\&\leq&2\|((I-P_{n})\oplus 0)B(x,0)\|\\&&+2\|((I-P_{n})\oplus 0)B((I-P_{n})x,0)\|\\&&+\|B-W_{n}TW_{n}^{*}\|\|(I-P_{n})x\|
\\&\leq&2\|((I-P_{n})\oplus 0)B(x,0)\|\\&&+2\|((I-P_{n})\oplus 0)B((I-P_{n})x,0)\|\\&&+(\|B\|+\|T\|)\|(I-P_{n})x\|
\\&\leq&2\|((I-P_{n})\oplus 0)B(x,0)\|+2\|B\|\|(I-P_{n})x\|\\&&+(\|B\|+\|T\|)\|(I-P_{n})x\|\to 0,
\end{eqnarray*}
as $n\to\infty$. Therefore, $W_{n}TW_{n}^{*}\to B$ in SOT.
\end{proof}
The following result seem to be known. (The results in \cite{Davidson} are somewhat related to this result.)
\begin{theorem}\label{310}
Let $T_{1},T_{2}\in\mathcal{B(H)}$. Suppose that there is a sequence $(P_{n})_{n\geq 1}$ of finite rank orthogonal projections on $\mathcal{H}$ such that $P_{n}\to I$ in
SOT and $P_{n}T_{1}|_{P_{n}\mathcal{H}}$ is a restriction (resp. compression, reducing part) of $T_{2}$. Then $T_{1}$ is a restriction (resp. compression,
reducing part) of an operator in $\mathcal{U}(T_{2})^{-\|\,\|}$.
\end{theorem}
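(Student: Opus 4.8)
The plan is to realize $T_{1}$ as a limit, in the appropriate operator topology, of a sequence of unitary conjugates of $T_{2}$, and then to feed this into the ultraproduct machinery of Lemma \ref{38} together with Theorem \ref{34}. First I would set $B_{n}:=P_{n}T_{1}|_{P_{n}\mathcal{H}}$ and, for each $n$, fix a unitary $V_{n}\colon P_{n}\mathcal{H}\to\mathcal{M}_{n}$ onto a subspace $\mathcal{M}_{n}\subseteq\mathcal{H}$ witnessing that $B_{n}$ is a restriction (resp. compression, reducing part) of $T_{2}$; here $\mathcal{M}_{n}$ is $T_{2}$-invariant in the restriction case, reducing in the reducing-part case, and arbitrary in the compression case. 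Since $P_{n}$ has finite rank, $\mathcal{M}_{n}$ is finite dimensional, so both $(I-P_{n})\mathcal{H}$ and $\mathcal{M}_{n}^{\perp}$ are separable and infinite dimensional; I would choose any unitary $J_{n}\colon(I-P_{n})\mathcal{H}\to\mathcal{M}_{n}^{\perp}$ and let $W_{n}:=V_{n}\oplus J_{n}$, a unitary on $\mathcal{H}$. Put $S_{n}:=W_{n}^{*}T_{2}W_{n}\cong T_{2}$. A direct computation shows that the compression of $S_{n}$ to $P_{n}\mathcal{H}$ equals $B_{n}$, and that in the restriction (resp. reducing) case $P_{n}\mathcal{H}$ is in fact invariant (resp. reducing) for $S_{n}$ with $S_{n}|_{P_{n}\mathcal{H}}=B_{n}$.

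The crux is to show $S_{n}\to T_{1}$ in SOT (restriction case), in $*$-SOT (reducing-part case), and in WOT (compression case). For this I would use that $P_{n}T_{1}P_{n}\to T_{1}$ and, taking adjoints, $P_{n}T_{1}^{*}P_{n}\to T_{1}^{*}$ in SOT, since $P_{n}\to I$ in SOT. For $x,y\in\mathcal{H}$ I would split $x=P_{n}x+(I-P_{n})x$ and $y=P_{n}y+(I-P_{n})y$: the $(P_{n}x,P_{n}y)$-contribution equals $\langle B_{n}P_{n}x,P_{n}y\rangle=\langle T_{1}P_{n}x,P_{n}y\rangle\to\langle T_{1}x,y\rangle$ because the compression of $S_{n}$ to $P_{n}\mathcal{H}$ is $B_{n}$, while each of the remaining three contributions is bounded by $\|T_{2}\|$ times a product of norms one of which is $\|(I-P_{n})x\|$ or $\|(I-P_{n})y\|$, hence tends to $0$ (crucially, $W_{n}$ is an isometry, so $\|W_{n}(I-P_{n})z\|=\|(I-P_{n})z\|\to0$). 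This gives WOT convergence in general; in the restriction and reducing cases, where $S_{n}|_{P_{n}\mathcal{H}}=B_{n}$, the stronger statements $\|(S_{n}-T_{1})x\|\to0$ and $\|(S_{n}^{*}-T_{1}^{*})x\|\to0$ follow from the same splitting applied to $S_{n}x=S_{n}P_{n}x+S_{n}(I-P_{n})x=P_{n}T_{1}P_{n}x+S_{n}(I-P_{n})x$.

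With convergence in hand, Lemma \ref{38} gives that $T_{1}$ is a restriction (resp. compression, reducing part) of $(S_{1},S_{2},\ldots)_{\mathscr{U}}$, and since $(S_{1},S_{2},\ldots)_{\mathscr{U}}=(W_{1},W_{2},\ldots)_{\mathscr{U}}^{*}\,T_{2}^{\mathscr{U}}\,(W_{1},W_{2},\ldots)_{\mathscr{U}}\cong T_{2}^{\mathscr{U}}=T_{2}\oplus\widehat{T}_{2}$, it is a restriction (resp. compression, reducing part) of $T_{2}\oplus\widehat{T}_{2}$. Because $T_{1}$ acts on the separable space $\mathcal{H}$, a separabilization exactly as in the proof of Theorem \ref{37} (via the variation of Lemma \ref{32}) produces a separable reducing subspace $\mathcal{M}$ of $\widehat{\mathcal{H}}$ for $\widehat{T}_{2}$ such that $T_{1}$ is a restriction (resp. compression, reducing part) of $T_{2}\oplus(\widehat{T}_{2}|_{\mathcal{M}})$. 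Finally, Theorem \ref{34} yields $T_{2}\oplus(\widehat{T}_{2}|_{\mathcal{M}})\simeq_{a}T_{2}$, so this operator lies in $\mathcal{U}(T_{2})^{-\|\,\|}$, completing the proof. I expect the main obstacle to be the convergence step of the second paragraph: it is the only place where one must control $S_{n}$ off the finite dimensional pieces $P_{n}\mathcal{H}$, and the argument depends on combining $P_{n}\to I$ strongly with the isometric nature of the $W_{n}$ to annihilate the complementary and cross contributions; identifying the correct topology in each of the three cases is precisely what makes Lemma \ref{38} applicable.
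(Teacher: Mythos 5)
Your proposal is correct, but its first half takes a genuinely different route from the paper. The paper does not construct any unitaries or prove any convergence statement: it observes directly that $T_{1}$ is a reducing part of the ultraproduct $(P_{1}T_{1}|_{P_{1}\mathcal{H}},P_{2}T_{1}|_{P_{2}\mathcal{H}},\ldots)_{\mathscr{U}}$ via the isometric embedding $x\mapsto(P_{n}x)_{\mathscr{U}}$ (isometric because $\|P_{n}x\|\to\|x\|$), and then, since each $P_{n}T_{1}|_{P_{n}\mathcal{H}}$ is by hypothesis a restriction (resp.\ compression, reducing part) of $T_{2}$, the ultraproduct itself embeds into $T_{2}^{\mathscr{U}}\cong T_{2}\oplus\widehat{T}_{2}$; the endgame (separable reducing subspace $\mathcal{M}$ via the variation of Lemma \ref{32}, then Theorem \ref{34}) is the same as yours. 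You instead build explicit unitaries $W_{n}=V_{n}\oplus J_{n}$ (this is where the finite rank of $P_{n}$ is used, to match the dimensions of $(I-P_{n})\mathcal{H}$ and $\mathcal{M}_{n}^{\perp}$), prove that $S_{n}:=W_{n}^{*}T_{2}W_{n}\to T_{1}$ in SOT (resp.\ WOT, $*$-SOT) --- your estimates are sound, with the uniform bound $\|S_{n}\|=\|T_{2}\|$ doing the work you attribute to isometry of $W_{n}$ --- and only then pass to the ultraproduct via Lemma \ref{38}. What your detour buys is a strictly stronger intermediate fact: under the hypotheses of Theorem \ref{310}, $T_{1}$ actually lies in $\mathcal{U}(T_{2})^{-SOT}$ (resp.\ $-$WOT, $-*\hyphen$SOT), so your argument in effect reduces Theorem \ref{310} to the nontrivial inclusion of Hadwin's characterization, Theorem \ref{37}, and makes that connection explicit; it also sidesteps the mild technicality in the paper's proof of forming an ultraproduct of operators acting on the varying spaces $P_{n}\mathcal{H}$. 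What the paper's route buys is brevity and uniformity: one embedding handles all three cases at once, with no choice of unitaries and no limiting estimates.
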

\begin{proof}
The operator $T_{1}$ is a reducing part of $(P_{1}T_{1}|_{P_{1}\mathcal{H}},P_{2}T_{1}|_{P_{2}\mathcal{H}},P_{3}T_{1}|_{P_{3}\mathcal{H}})_{\mathscr{U}}$ via the map
$x\mapsto (P_{n}x)_{\mathscr{U}}$. Hence, by assumption, $T_{1}$ is a restriction of $T_{2}^{\mathscr{U}}\cong T_{2}\oplus\widehat{T}_{2}$. Then we can find a
separable reducing subspace $\mathcal{M}$ for $\widehat{T}_{2}$ such that $T_{1}$ is a restriction of $T_{2}\oplus(\widehat{T}_{2}|_{\mathcal{M}})$. But by Theorem
\ref{33}, $T_{2}\oplus(\widehat{T}_{2}|_{\mathcal{M}})\simeq_{a}T_{2}$. Thus, the result follows.
\end{proof}
We conclude by briefly explaining how the work in this section was dervied. Suppose that the Banach spaces $X_{1},X_{2},\ldots$ have been replaced by operators
$T_{1},T_{2},\ldots\in\mathcal{B(H)}$, respectively. This suggests to replace the ultraproduct $(X_{1},X_{2},\ldots)_{\mathscr{U}}$ by the operator $(T_{1},T_{2},\ldots)_{
\mathscr{U}}$. In other words, the ultraproduct of Banach spaces should be replaced by the ultraproduct of the corresponding operators. The preceding result was motivated by the
concept of finite representability of Banach spaces (see, e.g., \cite[Chapter 8]{Diestel}), which is closely related to ultraproducts of Banach spaces. The other results
Theorem \ref{35} and Theorem \ref{36} and the proof of Theorem \ref{37} were inspired by the proof of the preceding result.

%
{\bf Acknowledgements.} I thank the referee for pointing out that in Proposition \ref{13}, we actually obtain that $A$ is block diagonal. In the original version, I
obtained that $A$ is quasidiagonal. I am grateful to C. Foias and C.~M. Pearcy for their very helpful criticisms and suggestions. I also thank W.~B. Johnson for the proof
of Proposition \ref{13}, which is simpler than the original proof, and other helpful remarks. The author was supported in part by the N.~W.~Naugle Fellowship and the A.~G.
\& M.~E.~Owen Chair in the Department of Mathematics.

\end{document}